\newcommand{\eps}{\varepsilon}
\newcommand{\HH}{{\mathbb H}}
\newcommand{\PP}{{\mathbb P}}
\newcommand{\RR}{{\mathbb R}}
\renewcommand{\SS}{{\mathbb S}}
\newcommand{\Vol}{\mathrm{vol}\,}
\newcommand{\ZZ}{{\mathbb Z}}
\newtheorem{theorem}{Theorem}
\newtheorem*{theorem*}{Theorem}
\newtheorem{lemma}[theorem]{Lemma}
\newtheorem{corr}[theorem]{Corollary}
\newtheorem{prop}[theorem]{Proposition}
\theoremstyle{definition}
\newtheorem*{deff*}{Definition}
\numberwithin{theorem}{section}
\numberwithin{equation}{section}
\begin{document}

\title[ Monotonicity of heat kernels]
{On monotonicity of heat kernels:\\ a new example and counterexamples}

\author[A. Burchard]{Almut Burchard} \address{University of Toronto, Canada} 
\email{almut@math.toronto.edu}

\author[A. D. Mart\'inez]{\'Angel D. Mart\'inez} \address{CUNEF Universidad, Spain} \email{angeld.martinez@cunef.edu}

\begin{abstract} 
 We discover a new, non-radial example of a manifold whose heat 
kernel decreases monotonically along all minimal geodesics. 
We also  classify the flat tori with this monotonicity property. 
Furthermore, we show that for a generic metric on any smooth manifold 
 the monotonicity property fails at large times.
 This answers a recent question of
Alonso-Or\'an, Chamizo, Mart\'inez, and Mas.

\end{abstract}

\maketitle

\section{\bf Introduction}\label{sec-intro}

The heat kernel 
on a manifold contains a wealth of local 
and global geometric information about the underlying space. 
 It is of central importance
for partial differential equations (describing
diffusion of a unit of heat released from a point and diffusing
through the manifold) and for probability 
(giving the transition densities for
Brownian motion).  Although the asymptotics of the heat kernel are well-understood both in the large or small time regimes, less is known
about its geometric properties at fixed, positive times.
 The main purpose of this paper is to study monotonicity of this kernel along geodesics.

In order to fix ideas, let us recall that on 
Euclidean space $\RR^n$ the heat kernel is given by 
\[
K_t(x,y)=(4\pi t)^{-n/2}\exp\bigl(-\tfrac{|x-y|^2}{4t}\bigr)\,.
\]
The fact that this kernel
is a strictly decreasing function of
the distance 
$|x-y|$ implies many classical inequalities
in Functional Analysis and Probability, such as
the isoperimetric inequality,
the P\'olya-Szeg\H{o} inequality for the 
Dirichlet energy ~\cite[Lemma 7.17]{LL}, 
the Faber-Krahn inequality for the fundamental
frequency of a domain,
as well as 
inequalities for Brownian motion and
path integrals~\cite{Lutt} (cf.
\cite[Chapter 3]{LL} or \cite[Chapter 8]{B}).

By definition,
the heat kernel on a Riemannian manifold is the fundamental solution of 
the heat equation $\partial_tu=\Delta u$, where $\Delta$ is the Laplace-Beltrami operator. Explicit expressions for the heat kernel are available only in very few cases.
On the sphere $\SS^n$ there are
no formulas when $n\ge 2$, while for the hyperbolic space
$\HH^n$ there are exact formulas that become increasingly 
cumbersome in higher dimensions.
Nevertheless, the heat kernels on these manifolds share 
the symmetry and monotonicity properties of the Euclidean one:

\begin{theorem}[Radially decreasing heat
kernels~\cite{CY, Ch, Andersson, NSS, ACMM}]
\label{thm-decreasing}
Let $M$ be the Euclidean space $\RR^n$, the sphere $\SS^n$,
or the hyperbolic space $\HH^n$, with the standard
uniform metric. 
For every time $t>0$, the heat kernel $K_t(x,y)$ is a
strictly decreasing function of the geodesic distance $d(x,y)$.
\end{theorem}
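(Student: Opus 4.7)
My plan is to exploit two-point homogeneity. Each of $\RR^n$, $\SS^n$, $\HH^n$ admits an isometry group acting transitively on ordered pairs $(x,y)$ at any prescribed distance, so $K_t(x,y)$ depends only on $t$ and $r := d(x,y)$; write $K_t(x,y) = u(t,r)$ with $u$ defined on $(0,\infty) \times [0, R)$, where $R = \infty$ in the Euclidean and hyperbolic cases and $R = \pi$ on the sphere. The task reduces to showing $\partial_r u(t,r) < 0$ for $0 < r < R$ and $t > 0$. For $\RR^n$ this is immediate from the Gaussian formula recalled in the introduction.

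For $\SS^n$ and $\HH^n$ I would pass to the radial form of the heat equation
\begin{equation*}
\partial_t u \;=\; \partial_r^2 u + (n-1)\,\sigma(r)\,\partial_r u,
\end{equation*}
with $\sigma(r) = \cot r$ on $\SS^n$ and $\sigma(r) = \coth r$ on $\HH^n$. Differentiating in $r$, the derivative $v := \partial_r u$ satisfies the linear parabolic equation
\begin{equation*}
\partial_t v \;=\; \partial_r^2 v + (n-1)\,\sigma(r)\,\partial_r v + (n-1)\,\sigma'(r)\, v,
\end{equation*}
whose zeroth-order coefficient $(n-1)\sigma'(r)$ equals $-(n-1)\csc^2 r$ or $-(n-1)\operatorname{csch}^2 r$, and hence is non-positive. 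This sign is precisely what the parabolic maximum principle requires to propagate an upper bound $v \leq 0$ forward in time without loss.

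For the parabolic boundary data, isotropy around the pole gives $v(t,0) = 0$ (and on $\SS^n$ antipodal symmetry gives $v(t,\pi) = 0$), while the Minakshisundaram--Pleijel short-time expansion $u(t,r) \sim (4\pi t)^{-n/2}e^{-r^2/4t}$ forces $v(t,r) < 0$ for small $t$ and $r > 0$. Applying the parabolic maximum principle on truncated strips $[\varepsilon, R-\varepsilon]$ and letting $\varepsilon \to 0$ would then yield $v \leq 0$ globally. Strictness follows from the strong maximum principle: if $v(t_0,r_0) = 0$ at an interior point, $v$ would vanish on a full parabolic component in $(0, t_0] \times (0, R)$, forcing $u$ to be $r$-independent there and contradicting the fact that $u$ solves the heat equation with point-mass initial datum concentrated at $r = 0$. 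I expect the chief technical nuisance to be the singularity of $\sigma(r)$ at $r = 0$ and $r = \pi$, where the standard parabolic framework on a regular domain does not apply verbatim; one removes this either by the $\varepsilon$-truncation above, using the identity $v(t,0) = 0$ to control the boundary terms uniformly as $\varepsilon \to 0$, or by lifting the argument to $M$ itself and invoking the Hopf-type strong maximum principle intrinsically.
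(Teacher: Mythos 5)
The paper itself offers no proof of Theorem~\ref{thm-decreasing}: it is quoted from the literature \cite{CY, Ch, Andersson, NSS, ACMM}, and the route you propose (reduce by two-point homogeneity to a radial function $u(t,r)$, derive the equation for $v=\partial_r u$ with zeroth-order coefficient $(n-1)\sigma'(r)\le 0$, and run the parabolic maximum principle with $v=0$ at the pole and, on $\SS^n$, at the antipode) is exactly the maximum-principle approach of \cite{ACMM} that the paper alludes to, and the same technique the paper deploys for the honeycomb torus in Proposition~\ref{prop-honeycomb}. The structural skeleton is sound, including the use of the strong maximum principle for strictness.

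There is, however, a genuine gap at the initial slice. You assert that the short-time expansion $u(t,r)\sim(4\pi t)^{-n/2}e^{-r^2/4t}$ ``forces $v(t,r)<0$ for small $t$.'' A pointwise asymptotic for $u$ does not control $\partial_r u$ unless you know the expansion holds with derivatives (and uniformly), and on $\SS^n$ the Minakshisundaram--Pleijel expansion is not uniform up to the cut locus $r=\pi$ --- precisely where you need control of the bottom of your truncated strips. So the parabolic boundary datum at $t=t_0$ is not justified as written, and the $\varepsilon$-truncation you describe only addresses the spatial singularities of $\sigma$, not this issue. The standard repair, and the one the paper itself uses in the proof of Proposition~\ref{prop-honeycomb}, is to run the whole argument for solutions with smooth, radially non-increasing initial data approximating the point mass (then $v\le 0$ on the genuine parabolic boundary is immediate), pass to the limit to get $\partial_r K_t\le 0$, and recover strictness either from the strong maximum principle applied to $v$ or by restarting the flow from $K_\eps$ via the semigroup property and letting $\eps\to 0$. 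A smaller omission: for $\RR^n$ and $\HH^n$ the strip is unbounded in $r$, so the comparison argument needs either the Gaussian-type decay of $v$ at infinity or a Phragm\'en--Lindel\"of version of the maximum principle; this deserves an explicit sentence. Your closing strictness argument is fine in spirit, but note that ``$u$ is $r$-independent on a parabolic component'' should be turned into a contradiction via the radial heat equation (which then forces $u$ constant in $t$ as well), rather than by appeal to the point-mass datum alone.
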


A number of different proofs of this result can be found in
the literature, going back to the 
work of Cheeger and Yau from the 1980s. It is a natural 
question to ask what can be said about monotonicity properties 
on other manifolds. In a recent paper, 
Alonso-Or\'an, Chamizo, Mart\'inez, and Mas 
obtain
an improvement in the case of rotationally symmetric manifolds about some point $x$, i.e. manifolds that admit global spherical coordinates $(\rho,\sigma)\in (0,D)\times\mathbb{S}^{n-1}$ such that the metric has the form $d\rho^2+A(\rho)d\sigma^2$ for some function $A$. Here, the diameter $D$ may 
be infinite or not. 
If the diameter is finite,
their result implies the following.

\begin{theorem}[Symmetric decrease
about a point~\cite{ACMM}]
\label{thm-decreasing-point} Let $(M,g)$ be
a compact smooth Riemannian manifold that is 
rotationally symmetric about  some point $x\in M$. 
For every time $t>0$
the heat kernel $K_t(x,y)$ 
is a strictly decreasing function of the 
geodesic distance $d(x,y)$. 
\end{theorem}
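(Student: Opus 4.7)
The plan is to use the rotational symmetry to reduce the problem to a one-dimensional weighted parabolic equation on $[0, D]$, and then to apply the parabolic maximum principle to the radial derivative of the heat kernel.

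First, by the rotational symmetry of $(M, g)$ about $x$ together with uniqueness of the heat kernel, the function $y \mapsto K_t(x, y)$ depends only on $\rho(y) = d(x, y)$, so one may write $K_t(x, y) = u(t, \rho)$ with $\rho \in [0, D]$. Compactness of $M$ and smoothness of the metric force both $\rho = 0$ and $\rho = D$ to correspond to smooth points of $M$ at which $A$ vanishes to second order, and smoothness of the radial function $u(t, \cdot)$ at these two poles forces $\partial_\rho u(t, 0) = \partial_\rho u(t, D) = 0$ for every $t > 0$. In the radial coordinate $u$ satisfies
\[
\partial_t u = \partial_\rho^2 u + \frac{\mu'(\rho)}{\mu(\rho)} \partial_\rho u, \qquad \mu(\rho) := A(\rho)^{(n-1)/2},
\]
and the goal becomes to show $\partial_\rho u < 0$ on $(0, D)$ for every $t > 0$.

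Next, I would set $v := \partial_\rho u$ and differentiate the equation in $\rho$ to obtain
\[
\partial_t v = \partial_\rho^2 v + \frac{\mu'}{\mu} \partial_\rho v + \Bigl(\frac{\mu'}{\mu}\Bigr)' v,
\]
with Dirichlet data $v(t, 0) = v(t, D) = 0$ coming from the vanishing of $\partial_\rho u$ at the poles. The short-time Gaussian asymptotics for $K_t(x, \cdot)$ give $v(t, \rho) \le 0$ on $[0, D]$ for all sufficiently small $t > 0$. To propagate nonpositivity to all later times, one sets $\tilde v := e^{-Ct} v$ with $C$ large; this converts the equation into one whose zero-order coefficient is nonpositive on any fixed sub-interval $[\varepsilon, D - \varepsilon]$, and the weak maximum principle then yields $v \le 0$ there. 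Letting $\varepsilon \to 0$ gives $v \le 0$ globally, and the strong maximum principle upgrades this to the strict inequality $v < 0$ on $(0, D)$ for every $t > 0$.

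The main obstacle is the degeneracy of the polar coordinate system at the endpoints. The drift $\mu'/\mu$ blows up like $(n-1)/\rho$ as $\rho \to 0^+$ and like $-(n-1)/(D - \rho)$ as $\rho \to D^-$, so the zero-order coefficient $(\mu'/\mu)'$ is unbounded on $(0, D)$ and no single choice of $C$ absorbs it globally. This is why the maximum principle must be applied on compact sub-intervals followed by a limiting argument, which in turn relies crucially on the vanishing of $v$ at both endpoints. An alternative, manifold-level formulation working directly with $\nabla u$ on $M$ using the smoothness of $u$ there avoids writing down the degenerate one-dimensional equation, but ultimately reduces to the same computation once the radial structure is invoked.
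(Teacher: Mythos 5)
Your overall strategy --- reduce by symmetry to a radial function $u(t,\rho)$ and run a parabolic maximum principle on $v=\partial_\rho u$ --- is exactly the method the paper attributes to \cite{ACMM} (the paper itself does not prove Theorem~\ref{thm-decreasing-point}; it only cites it and sketches the idea). But as written your argument has genuine gaps. First, your boundary condition at $\rho=D$ rests on the claim that compactness forces $\rho=D$ to be a second smooth pole at which $A$ vanishes to second order. That is false in general: $\RR\PP^n$, which is explicitly covered by the theorem and singled out in the paper, has global spherical coordinates with $D=\pi/2$, $A(D)=1$, and $\{\rho=D\}$ equal to the cut locus, a hypersurface rather than a point. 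The Neumann condition $\partial_\rho u(t,D)=0$ does still hold in that case, but it requires a different justification (smoothness of the kernel across the cut locus and the way the boundary sphere is identified), and without it the Dirichlet data $v(t,D)=0$ that your maximum principle needs is unjustified. Second, the seed of the argument, ``$v\le 0$ for all sufficiently small $t$ by short-time Gaussian asymptotics,'' is a nontrivial unproved claim: the short-time expansion is not uniform near the cut locus, and near $\rho=0$ the factor $\rho/(2t)$ degenerates so that subleading terms can compete. The standard way around this --- and the one this paper uses in the analogous Proposition~\ref{prop-honeycomb} --- is to run the maximum-principle argument for solutions whose initial data are smooth, radially decreasing approximations of the point mass (so the sign of $v$ at $t=0$ holds by construction) and then pass to the limit; that sidesteps small-time kernel asymptotics entirely.

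Third, the propagation step is structured incorrectly. On $[\eps,D-\eps]$ the parabolic boundary includes the lateral sides $\rho=\eps$ and $\rho=D-\eps$, where the sign of $v$ is precisely what you are trying to prove, so the weak maximum principle does not close, and ``letting $\eps\to 0$'' does not repair this as stated --- all the more so because your constant $C=C(\eps)$ blows up as $\eps\to 0$. Your diagnosis of the obstacle is also off: near a pole $(\mu'/\mu)'\sim -(n-1)/\rho^2$, i.e.\ the zero-order coefficient blows up \emph{downwards} and is bounded \emph{above} on $(0,D)$, which is the only direction that matters for propagating $v\le 0$; a single global constant $C$ suffices once the lateral data are controlled, for instance by using continuity of $v$ on $[0,D]\times[t_0,T]$ together with $v=0$ at the endpoints to make the lateral boundary values smaller than any $\delta$, applying the maximum principle, and letting $\delta\to 0$. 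So the scheme is salvageable and is in the same spirit as \cite{ACMM}, but the endpoint condition at $\rho=D$, the small-time input, and the sub-interval limiting argument all need to be redone.
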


In particular,
the conclusion of Theorem~\ref{thm-decreasing} extends to 
the real projective space $\RR\PP^n$, endowed with the 
uniform metric. The proof given in~\cite{ACMM} 
applies
the parabolic maximum principle to radial solutions of the 
heat equation, expressed in normal coordinates about $x$. 
This also yields a new proof of Theorem~\ref{thm-decreasing}. 
We refer the interested reader 
to \cite{ACMM} for a more detailed
discussion and a complete bibliography on the topic.

As observed in~\cite{ACMM}, in the absence
of symmetry $K_t$ cannot be a function of the 
geodesic distance alone.
On a general Riemannian manifold, the authors
propose to consider instead
the monotonicity of the heat kernel along geodesics,
a much weaker property.
They raise the question (we paraphrase freely)
{\em whether the heat kernel $K_t(x,\gamma(s))$ decreases 
along geodesics $\gamma$ emanating 
from $x$? Can such a monotonicity  property
hold for all times $t>0$, up to the cut locus of $x$?} 
They observe that flat rectangular tori,   and,
more generally, any product of manifolds
with geodesically decreasing heat kernels inherits this
property from their factors.

In this paper, we describe a new example 
of geodesic monotonicity, given by 
the flat torus associated with
a regular triangular (`honeycomb') lattice in the plane.
This torus appears as 
an extremal for numerous spectral 
and isoperimetric inequalities 
(see, for instance~\cite{B-1997, EI, H, OMC, FS} for
some recent results).
It is distinguished by a
six-fold reflection symmetry, which manifests 
itself in hexagonal fundamental domain of the lattice.
We use the maximum principle to show that
the heat kernel $K_t(x,y)$
decreases as $y$ moves along any
geodesic emanating from $x$ until
it meets the boundary of the fundamental hexagon centered at~$x$.

However, in general the answer to
the above question is negative. We present 
several results that indicate just how rare
it is for a Riemannian metric to have
a geodesically decreasing heat kernel.
Among flat two-dimensional tori, the only 
other examples are those associated with 
rectangular lattices (see Theorem~\ref{thm-flat}). The heat kernel on a flat Klein bottle cannot be geodesically decreasing for large $t$.
In an upcoming paper, we prove that on $\SS^2$, the only Riemannian metrics with geodesically decreasing heat kernels~\cite{BM-S2} are the uniform metrics. On any smooth Riemannian manifold, geodesic
monotonicity places narrow constraints on the 
multiplicity of eigenvalues
of the Laplace-Beltrami operator, and on the corresponding
spectral projections (Theorem~\ref{thm-constant}).
It is an open question what other
examples may exist on surfaces of higher genus
and in higher dimensions.

\section{\bf Main results}\label{sec-main}

We need to fix some basic notions. Clearly,
the monotonicity described above  makes sense only for
minimal geodesics, i.e., for geodesics whose length
equals the distance between their endpoints.

\begin{deff*}[Geodesic monotonicity]
\label{deff-mono}
Let $(M,g)$ be a Riemannian manifold.  
We say that a real-valued function $F$ on $M$  
{\em decreases geodesically about a point} $x\in M$, if the 
function $F(\gamma(s))$ is non-increasing in $s$ 
for every minimal geodesic $\gamma:[0,s_0]\to M$ with
$\gamma(0)=x$.
A function $G$ on $M\times M$ 
{\em decreases geodesically}, if 
for each $x\in M$ the function $G(x,\cdot)$ decreases geodesically about 
$x$.
\end{deff*}

As concrete examples, we consider the heat kernel on
flat tori of the form $M=\RR^2/\Lambda$, where $\Lambda$ is 
a planar lattice of rank two. 
It was shown in~\cite[final remarks]{ACMM} that geodesic monotonicity 
holds on the tori $a\SS^1\times b\SS^1$, which correspond to 
the rectangular lattices spanned by 
$\{(2\pi a,0), (0,2\pi b)\}$.
We will show that geodesic monotonicity also holds on the
torus defined by a regular triangular lattice
--- a remarkable exceptional case,
which is neither rotationally symmetric nor a product space. 
It turns out that except for  isometries and dilations,
there are no other flat tori with geodesic monotonicity:

\begin{theorem} 
 \label{thm-flat} Let $\Lambda\subset\RR^2$ be a lattice of rank
two, and let $K_t(x,y)$ be the heat kernel
on the flat torus $\RR^2/\Lambda$.

\begin{enumerate}
\item If $\Lambda$ is a regular triangular lattice, 
then $K_t$ is strictly geodesically decreasing
for every $t>0$.

\item Conversely, if $K_t(x,y)$ is geodesically
decreasing for some sequence of times $t_k\to\infty$, then
$\Lambda$ is either a rectangular lattice, or a regular triangular lattice.
\end{enumerate}
\end{theorem}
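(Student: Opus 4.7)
\emph{Part (1): monotonicity on the hexagonal torus.} I propose to combine the $12$-fold dihedral symmetry $D_6$ of the hexagonal torus at $x$ with the parabolic maximum principle. By translation invariance we set $x=0$, and use the $D_6$-action at the origin to reduce monotonicity to rays $\gamma(s)=sv$ whose direction $v$ lies in a fundamental angular wedge of opening $\pi/6$. The central object is the directional derivative
\[
u(y,t):=\langle \nabla_y K_t(0,y),v\rangle,
\]
which solves the heat equation on the torus with an initial dipole singularity at the origin pointing in the $-v$ direction. The goal is to show $u(sv,t)<0$ for $s\in(0,s_v)$, where $s_v$ is the distance from $0$ to the boundary $\partial V$ along the ray and $V$ is the fundamental hexagon. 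When $v$ points to an edge midpoint or a vertex of $V$ (the two extremal directions of the wedge), the orthogonal reflection through $0$ is a lattice symmetry and conjugates $u$ to $-u$, so the strong maximum principle together with the sign of the dipole initial data yields $u(sv,t)<0$ on the $v$-positive half-torus. For $v$ strictly inside the wedge no such reflection is available, and one would instead work on the $1/12$ fundamental triangle of $V$ using the Neumann-type conditions imposed by $D_6$-invariance on its three edges to rule out interior zeros of $\nabla K_t(0,\cdot)$. This generic-direction case is the main technical obstacle in part (1).

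\emph{Part (2): classification of flat tori.} I plan a large-time asymptotic analysis via Poisson summation,
\[
K_t(0,y)=\frac{1}{|T|}\Bigl(1+e^{-4\pi^2 c_1^2 t}\,S_1(y)+O(e^{-4\pi^2 c_2^2 t})\Bigr),\qquad
S_1(y):=\sum_{|\lambda^*|=c_1}e^{2\pi i\lambda^*\cdot y},
\]
where $c_1<c_2$ are the two smallest nonzero lengths in $\Lambda^*$. Geodesic monotonicity of $K_{t_k}$ along $t_k\to\infty$ forces $S_1$ itself to be geodesically non-increasing on the Voronoi cell $V$. Since in $\RR^2$ the shortest dual vectors occur with multiplicity $2$, $4$, or $6$, I would proceed by cases. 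In the multiplicity-$2$ case, $S_1(y)=2\cos(2\pi\lambda^*_{\min}\cdot y)$, so monotonicity is equivalent to the containment $V\subset\{|\lambda^*_{\min}\cdot y|\leq\tfrac12\}$; working in a reduced basis $e_1=(\alpha,0)$, $e_2=(p,q)$ with $\alpha\leq|e_2|$ and $0\leq p\leq\alpha/2$, the shortest dual is $\pm e_2^*=\pm(0,1/q)$ and the containment reduces to $|e_2|\leq q$, forcing $p=0$, i.e., $\Lambda$ is rectangular. In the multiplicity-$4$ case one has $\alpha=|e_2|$; a direct computation along the geodesic $y=(0,r)$ up to the Voronoi boundary $r=\alpha^2/(2q)$ yields
\[
S_1'\bigl(\tfrac{\alpha^2}{2q}\bigr)=\frac{4\pi}{q}\Bigl(\sin(\pi p^2/q^2)-\tfrac{p}{\alpha}\sin(\pi p\alpha/q^2)\Bigr),
\]
which is strictly positive for $0<p<\alpha/2$ since $u\mapsto\sin(u)/u$ is strictly decreasing on $(0,\pi]$ and one checks $\pi p^2/q^2<\pi p\alpha/q^2\leq\pi$; thus monotonicity forces $p=0$, giving the square case (which is rectangular). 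Finally, multiplicity $6$ characterises the regular triangular lattice by the classification of two-dimensional point groups. The main obstacle in part (2) is the algebraic derivative estimate in the multiplicity-$4$ case, where a careful use of the concavity of $\sin$ is essential.
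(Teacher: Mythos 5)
Your Part (1) has a genuine gap, and it is exactly the point you flag yourself: the reflection argument covers only the two exceptional directions (toward an edge midpoint or a vertex of the hexagon), and for a generic ray inside the $\pi/6$ wedge there is no lattice reflection fixing the ray, so the directional derivative $\langle\nabla_y K_t(0,y),v\rangle$ inherits no usable boundary condition. Moreover, the fallback you sketch --- ruling out interior zeros of $\nabla K_t(0,\cdot)$ on the fundamental triangle --- would not close the gap even if carried out: monotonicity along a ray needs a sign on the radial component $y\cdot\nabla K_t(0,y)$, which a nonvanishing gradient does not give. The paper resolves the generic case by running the maximum principle \emph{simultaneously} on the two partial derivatives $v_1=\partial_{x_1}u$ and $v_2=\partial_{x_2}u$ on the fundamental right triangle $D$ (for symmetric, radially decreasing approximations of the kernel): $v_1$ vanishes on the vertical edge, $v_2$ on the horizontal edge, and on the hypotenuse the gradient is tangent, so there $v_1:v_2=\sqrt{3}:1$. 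If either $v_1$ or $v_2$ attained a positive-time maximum, the fixed ratio forces both maxima onto the hypotenuse at a common point, and then the tangential derivative $\tau\cdot\nabla u$ would have an interior maximum on the region obtained by doubling $D$ across the hypotenuse, contradicting the strong maximum principle. Hence $v_1\le 0$ and $v_2\le 0$ on $D$, which gives $y\cdot\nabla u\le 0$ for \emph{all} rays in the wedge at once (points of $D$ have nonnegative coordinates); a second pass and a limiting argument give strict monotonicity for $K_t$ itself. Some device of this kind, controlling both components of the gradient simultaneously, is what your outline is missing.

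Your Part (2) is essentially correct and follows the paper's overall strategy: pass to the large-time limit so that geodesic monotonicity of $K_{t_k}$ forces monotonicity of the kernel of the spectral projection onto the first nonzero eigenvalue (your $S_1$), then split according to the multiplicity $2$, $4$, $6$ of the shortest dual vectors. The multiplicity-two case coincides with the paper's argument: the vertical minimal geodesic stays minimal beyond the slab $\{|\lambda^*_{\min}\cdot y|\le\tfrac12\}$ unless the lattice is rectangular. In the rhombic (multiplicity-four) case you take a different route: a direct computation of the derivative of $S_1$ at the exit point of the vertical geodesic, concluded with the strict decrease of $\sin u/u$ on $(0,\pi]$; your formula and the range check ($\pi p\alpha/q^2<\pi$ for $0<p<\alpha/2$) are correct. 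The paper instead follows the geodesic to the circumcenter of the fundamental triangle and uses the reflection symmetry across the long diagonal together with the orthogonality of the rhombus diagonals; your version is a self-contained trigonometric estimate, the paper's avoids estimates in favor of symmetry. The multiplicity-six identification of the regular triangular lattice is standard. So Part (2) stands; it is Part (1) that needs the missing argument.
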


Continuing in that direction, we will see below that the heat kernel on
any flat Klein bottle cannot be geodesically
decreasing for large values of~$t$. These examples suggest 
that geodesic monotonicity of the heat
kernel constrains the topology of the manifold, as well as the metric.
The remainder of the paper is dedicated to identifying some of the constraints.

From now on, we restrict the discussion
to compact, connected Riemannian manifolds $(M,g)$ whose metric is 
smooth (enough). We prove that for a general metric on any given
manifold, geodesic monotonicity of the heat 
kernel cannot be expected 
at large times,
not even in a small neighborhood of a typical point $x$ (cf. the 
proof of  Theorem \ref{thm-simple}). 

We next introduce a basic technique 
that will be further developed below, and 
prove a simple result.
The heat kernel on a compact Riemannian
manifold $(M,g)$ can be decomposed using 
the natural notion of {\em harmonics}, namely, 
the eigenfunctions of the Laplace-Beltrami 
operator~$-\Delta_g$. Its spectrum consists of a 
sequence of nonnegative eigenvalues
$$0=\lambda_0<\lambda_1\le \lambda_2\le \ldots
$$
repeated according to multiplicity.
We take the eigenfunctions to be an orthonormal sequence $(\phi_j)_{j\ge 0}$ of real-valued functions in $L^2(M,d\Vol_g)$. Since the Laplace-Beltrami operator is the infinitesimal generator of the heat semigroup, the heat kernel has the
spectral expansion 
\begin{equation}
\label{eq:spectral}
K_t(x,y)=
\sum_{j=0}^\infty e^{-\lambda_j t} \phi_j(x)\phi_j(y)\ >\ 0\,.
\end{equation}
As a consequence of 
general pointwise bounds satisfied by the eigenfunctions, 
combined with Weyl's asymptotic law for the eigenvalues,
the series converges absolutely for all $x,y\in M$ and all $t>0$ (see Section~\ref{sec-aux} for more details).

\begin{theorem}
\label{thm-simple}
Let $(M,g)$ be a compact connected Riemannian manifold such that the first eigenvalue of $-\Delta_g$ is simple. Then for all sufficiently
large $t$, the heat kernel fails to be
geodesically decreasing.
\end{theorem}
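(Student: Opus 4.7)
The plan is to argue by contradiction using the spectral expansion \eqref{eq:spectral}. Suppose the heat kernel is geodesically decreasing for some unbounded sequence of times $t_k\to\infty$. Fix $x\in M$ and any unit vector $v\in T_xM$; both $v$ and $-v$ are initial velocities of geodesics that are minimal on a neighborhood of $s=0$, so the monotonicity hypothesis forces $\frac{d}{ds}K_{t_k}(x,\exp_x(\pm sv))|_{s=0}\le 0$. These two one-sided inequalities combine via the chain rule to give $\langle \nabla_y K_{t_k}(x,y)|_{y=x},v\rangle=0$ for every unit $v$, hence
\[
\nabla_y K_{t_k}(x,y)\bigr|_{y=x}=0 \quad\text{for every } x\in M \text{ and every } k.
\]

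Next I would differentiate \eqref{eq:spectral} in $y$ termwise (justified by the eigenfunction and Weyl bounds recalled in Section~\ref{sec-aux}), evaluate at $y=x$, and multiply by $e^{\lambda_1 t}$ to isolate the $j=1$ contribution:
\[
e^{\lambda_1 t}\nabla_y K_{t}(x,y)\bigr|_{y=x}=\phi_1(x)\nabla \phi_1(x)+\sum_{j\ge 2}e^{-(\lambda_j-\lambda_1)t}\,\phi_j(x)\nabla \phi_j(x);
\]
the $j=0$ term has dropped out because $\phi_0$ is constant. Simplicity of $\lambda_1$ provides the strict spectral gap $\lambda_2-\lambda_1>0$, and combining Weyl's law $\lambda_j\sim cj^{2/n}$ with the standard polynomial $L^\infty$ bounds on $\phi_j$ and $\nabla \phi_j$ shows that the tail converges to $0$ uniformly on $M$ as $t\to\infty$. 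Passing to the limit along $t_k\to\infty$ in the critical-point identity above therefore yields
\[
\phi_1(x)\nabla \phi_1(x)=\tfrac12\nabla\bigl(\phi_1^2\bigr)\equiv 0\quad\text{on } M.
\]

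Consequently $\phi_1^2$ is locally constant and hence constant on the connected manifold $M$, and since $\phi_1$ is continuous and real-valued, $\phi_1$ itself must be constant. But a constant eigenfunction for $\lambda_1>0$ vanishes identically, contradicting $\|\phi_1\|_{L^2(M)}=1$. The main technical point is the uniform convergence of the tail in the second displayed equation, which relies on the polynomial eigenfunction bounds from Section~\ref{sec-aux}. This step also makes transparent why simplicity is essential: without a gap at $\lambda_1$ the leading-order identity would instead read $\sum_{j\,:\,\lambda_j=\lambda_1}\phi_j\nabla \phi_j\equiv 0$, i.e.\ $\sum \phi_j^2\equiv\mathrm{const}$, which is consistent with nontrivial eigenfunctions --- witness $\sin^2+\cos^2\equiv 1$ on $\SS^1$, where the heat kernel is indeed geodesically decreasing.
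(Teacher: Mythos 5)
Your proposal is correct, but it takes a genuinely different route from the paper's own proof of Theorem~\ref{thm-simple}. The paper argues at "zeroth order": since $\phi_1$ is non-constant with zero mean, one can pick points with $0<\phi_1(x)<\phi_1(y)$, and the spectral limit $e^{\lambda_1 t}(K_t-\phi_0^2)\to\phi_1\otimes\phi_1$ (using the gap $\lambda_2>\lambda_1$) shows $K_t(x,y)>K_t(x,x)$ for large $t$; monotonicity then fails along any minimal geodesic joining $x$ to $y$. That argument needs only sup-norm bounds on eigenfunctions, no differentiation of the kernel, and it exhibits an explicit geodesic where monotonicity breaks. Your argument is infinitesimal: the two-sided derivative trick at $y=x$ is exactly the paper's Lemma~\ref{lem-const} (vanishing gradient on the diagonal), and passing to the limit in the differentiated spectral series gives $\nabla(\phi_1^2)\equiv 0$, i.e.\ the $m=1$ case of equation~\eqref{eq:constant}; the contradiction with $\lambda_1>0$ and $\|\phi_1\|_{L^2}=1$ is then the same deduction the paper makes when it derives Theorem~\ref{thm-simple} from Theorem~\ref{thm-constant}. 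What your route costs is the justification of termwise differentiation and uniform control of $\sum_{j\ge2}e^{-(\lambda_j-\lambda_1)t}\phi_j\nabla\phi_j$, which requires polynomial $L^\infty$ bounds on $\nabla\phi_j$ as well as on $\phi_j$; these are standard (elliptic estimates/Sobolev embedding give $\|\nabla\phi_j\|_{L^\infty}\le C\lambda_j^{N}$), but the paper only quotes sup-norm bounds, so you should state and cite the gradient bound explicitly. What it buys is a stronger pointwise identity and an argument that, as you note, scales directly to higher multiplicity — it is essentially the mechanism behind Lemma~\ref{lem-principal-m} and Theorem~\ref{thm-constant} — and you correctly formulate the contradiction hypothesis as monotonicity along some sequence $t_k\to\infty$, which is the exact negation of the claim.
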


\begin{proof} 
Let $\phi_1$ be a normalized eigenfunction for $\lambda_1$. 
Since $\phi_1$ is continuous, non-constant, and integrates
to zero over $M$,
its range is an interval that contains $0$
in its interior.  Choose $x,y\in M$ with
$0<\phi_1(x)<\phi_1(y)$.
Then
$$
\phi_1(x)\bigl(\phi_1(y)-\phi_1(x)\bigr)\ >\ 0\,.
$$
Using that the eigenfunction $\phi_0$ associated with
$\lambda_0=0$ is constant, and that
$\lambda_2>\lambda_1$, we obtain from
the spectral expansion that
$$
e^{\lambda_1 t}
\left(K_t(x,y)-\phi_0^2\right) \ \to\ \phi_1(x)\phi_1(y)
$$
as $t\to\infty$, and correspondingly for $K_t(x,x)$. 
For $t$ sufficiently large, it follows that
$$
K_t(x,y)-K_t(x,x)
\ \ge \ \frac12 e^{-\lambda_1 t}\phi_1(x)\bigl(\phi_1(y)-\phi_1(x)\bigr)\ >\ 0\,.
$$
On a minimal geodesic $\gamma$ with $\gamma(0)=x$ and $\gamma(1)=y$, this means that
$K_t(x,\gamma(1))>K_t(x,\gamma(0))$
which shows that $K_t(x,\gamma(s))$ is not monotone decreasing in $s$.
\end{proof}

The hypothesis of Theorem~\ref{thm-simple} is
not empty by a well-known result due to
Uhlenbeck~\cite[Theorem 8]{U} on the generic simplicity of
eigenvalues. The result says that on any compact manifold $M$ of 
dimension $n>1$, the metrics $g$ for which all eigenvalues
of $-\Delta_g$ are simple form a residual subset of 
$C^r$ for every $r\ge n+4$.

Beyond Theorem~\ref{thm-simple},
we find that monotonicity of the heat 
kernel  implies conditions on the eigenspace
associated with the principal eigenvalue,
depending on its multiplicity. 
Indeed, if the first eigenvalue has  multiplicity two 
we have the following rigidity result.

\begin{theorem} \label{thm-product}
Let $(M,g)$ a compact connected 
Riemannian manifold such that the principal eigenvalue of $-\Delta_g$ has 
multiplicity two. If the heat kernel $K_t$
is geodesically decreasing for some sequence of
times $t_k\to\infty$,
then, up to isometry,  $M$ is a product
space $N\times \lambda_1^{-1/2}\SS^1$, 
where $N$ is a connected totally geodesic submanifold of codimension one in $M$.
\end{theorem}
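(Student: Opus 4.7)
The plan is to extract structural information from the two-dimensional first eigenspace $E_1$, promote it to a parallel splitting of the Riemannian universal cover of $M$, and finally use a cut-locus computation along the $V$-flow to rule out any twist.

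Fix an orthonormal basis $\phi_1,\phi_2$ of $E_1$ and define $F(x,y):=\phi_1(x)\phi_1(y)+\phi_2(x)\phi_2(y)$. Since $\lambda_2>\lambda_1$, multiplying the hypothesis $K_{t_k}(x,\gamma(s))\le K_{t_k}(x,\gamma(s'))$ by $e^{\lambda_1 t_k}$, subtracting the $\phi_0$ contribution, and letting $k\to\infty$ as in the proof of Theorem~\ref{thm-simple} shows that for each $x$ the function $F(x,\cdot)$ is non-increasing along every minimal geodesic from~$x$. Differentiating at $s=0$ along any direction $v\in T_xM$ (and using $-v$ equally) gives $\nabla_y F(x,x)=0$, hence $\nabla(\phi_1^2+\phi_2^2)\equiv 0$, so $\phi_1^2+\phi_2^2\equiv c$ is a positive constant. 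Writing $(\phi_1,\phi_2)=\sqrt{c}\,(\cos\theta,\sin\theta)$ for a smooth map $\theta\colon M\to S^1=\RR/2\pi\ZZ$ and plugging into the eigenvalue equations yields the linear system
\[
  \sin\theta\,\Delta\theta+\cos\theta(|\nabla\theta|^2-\lambda_1)=0,\quad \cos\theta\,\Delta\theta-\sin\theta(|\nabla\theta|^2-\lambda_1)=0,
\]
whose unique solution is $\Delta\theta\equiv 0$ and $|\nabla\theta|^2\equiv\lambda_1$.

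Because $F(x,\gamma(s))=c\cos(\theta(\gamma(s))-\theta(x))$ attains its maximum $c$ at $s=0$, the non-increasing property forces $\theta\circ\gamma\equiv\theta(x)$ along any minimal geodesic whose endpoint also lies in the level set $N_c:=\theta^{-1}(c)$; hence each $N_c$ is a totally geodesic hypersurface. The unit vector field $V:=\nabla\theta/\sqrt{\lambda_1}$ has geodesic integral curves (because $|\nabla\theta|$ is constant), and being orthogonal to the totally geodesic $N_c$ this gives $\nabla_X V\equiv 0$; thus $V$ is parallel. By de Rham the Riemannian universal cover splits as $\widetilde M=\widetilde N\times\RR$ with $V=\partial_t$, and every deck transformation acts as $(y,t)\mapsto(\alpha(\gamma)y,\,t+\tau(\gamma))$. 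Single-valuedness of $\theta$ in $S^1$ forces $\tau(\Gamma)\subset T\ZZ$ with $T:=2\pi/\sqrt{\lambda_1}$, while compactness of $M$ forbids $\tau\equiv 0$, so $\tau(\Gamma)=kT\ZZ$ for some integer $k\ge 1$.

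The main obstacle is to show that the twist is trivial. Pick $\gamma_0\in\Gamma$ with $\tau(\gamma_0)=kT$, set $K:=\alpha(\ker\tau)$ (so $N:=\widetilde N/K$ is our candidate fiber), and let $\psi\colon N\to N$ be induced by $\alpha(\gamma_0)$; this is well-defined because $\ker\tau$ is normal in~$\Gamma$. Along the $V$-trajectory $s\mapsto\phi_s(p)$ one has $F(p,\phi_s p)=c\cos(s\sqrt{\lambda_1})$, which is non-increasing only on $[0,T/2]$. Minimizing over deck transformations the distance in $\widetilde M$ from a lift of $p$ to the lifts of $\phi_s(p)$ identifies the range of arc-lengths on which the $V$-trajectory is a minimal geodesic of $M$ as $[0,s^*(p)]$, where
\[
  s^*(p)=\tfrac{kT}{2}+\frac{d_N([p],\psi([p]))^2}{2kT},
\]
the competing shortcut being realized by an element of the coset $\gamma_0^{-1}\ker\tau$. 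Geodesic monotonicity applied to initial segments of this $V$-trajectory requires $s^*(p)\le T/2$ for every~$p$; comparison with the formula forces $k=1$ and $d_N([p],\psi([p]))=0$ for all~$p$, i.e.\ $\psi=\mathrm{id}_N$. Consequently $\alpha(\gamma_0)\in K$, and after replacing $\gamma_0$ by a representative of its coset modulo $\ker\tau$ we may assume $\alpha(\gamma_0)=\mathrm{id}_{\widetilde N}$; hence $\Gamma=K\times\langle\gamma_0\rangle$ is a direct product and $\widetilde M/\Gamma$ is isometric to $N\times(\RR/T\ZZ)=N\times\lambda_1^{-1/2}\SS^1$. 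The factor $N=\widetilde N/K$ is connected as a continuous image of the connected~$\widetilde N$, and is totally geodesic of codimension one by construction.
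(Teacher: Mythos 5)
Your argument is correct, and while the opening and closing moves coincide with the paper's, the structural middle is genuinely different. Both proofs start by passing from $K_{t_k}$ to the projection kernel $P_{\lambda_1}$, deducing $\phi_1^2+\phi_2^2=\mathrm{const}$ (the paper's Lemmas~\ref{lem-const} and~\ref{lem-principal-m}), obtaining $|\nabla\theta|^2=\lambda_1$ with harmonic $\theta$, and using monotonicity of $P_{\lambda_1}$ to make the joint level sets totally geodesic (Lemma~\ref{lem-convex}); and both end with the same cut-locus contradiction, namely that if there is any twist the flow line of $V$ stays minimal past arclength $\pi\lambda_1^{-1/2}$ while $c^2\cos(\sqrt{\lambda_1}\,s)$ increases there. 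Where you diverge is in how the product structure is produced: the paper builds a mapping torus by hand from the flow of $V$ and its Poincar\'e return map (Lemma~\ref{lem-diffeo}), then shows the leaf metrics $g_h$ are independent of $h$ by a first-variation argument exploiting the totally geodesic leaves, and only then kills the twist; you instead observe that a unit field with geodesic integral curves orthogonal to totally geodesic leaves is parallel, split the universal cover by de Rham, and reduce everything to the deck group $(y,t)\mapsto(\alpha(\gamma)y,\,t+\tau(\gamma))$, with your quantitative cut-time formula simultaneously forcing $k=1$ (which in the paper's route is the connectedness of $N$, supplied by Lemma~\ref{lem-convex}) and $\psi=\mathrm{id}$. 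Your route is more conceptual and packages the rigidity in one step; the paper's is more elementary (no covering-space or holonomy machinery) and its intermediate mapping-torus statement survives for higher double eigenvalues, where Lemma~\ref{lem-convex} is unavailable (Proposition~\ref{prop-double}). Three small points you gloss over, all repairable: geodesic convexity of a level set yields ``totally geodesic'' only after the short uniqueness-of-geodesics argument in Lemma~\ref{lem-convex}; your formula for $s^*(p)$ is the exact cut time only when $s^*(p)\le kT$ (otherwise farther cosets interfere), though the inequality you actually need --- that the cut time exceeds $T/2$ unless $k=1$ and $d_N([p],\psi([p]))=0$ --- holds regardless; and the deduction ``$\psi=\mathrm{id}_N$ implies $\alpha(\gamma_0)\in K$'' uses that $\widetilde N$ is connected and $K$ discrete, so the pointwise correcting element of $K$ is locally constant.
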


In particular, among two-dimensional 
manifolds where $\lambda_1$ is a double 
eigenvalue,
only  flat tori of the form $a\SS^1\times b\SS^1$ have
geodesically decreasing heat kernels.

The proof of the theorem implies that the nodal sets 
of a principal eigenfunction on $M$ has exactly two connected 
components, both isometric to $N$. By separation
of variables, the heat kernel on $M$ is 
the product of the heat kernels on
$N$ and $\lambda^{-1/2}\SS^1$, and
the spectrum of $-\Delta_g$ is the sum of 
the spectra for the factors.
Moreover the heat kernel on $(N,g)$ 
is geodesically decreasing, and the principal eigenvalue of $-\Delta_g$ on $N$ exceeds
$\lambda_1$. 

Let us mention that manifolds satisfying the conclusion of the theorem are quite rare. We refer 
to \cite{murphyetal} for historical  details and a proof that 
generic manifolds do not have nontrivial totally geodesic submanifolds.

The starting point for the proof of
Theorem~\ref{thm-product} is again the spectral expansion. If $\lambda_1$ has multiplicity
$m$, then
\begin{equation}
\label{eq:limit}
e^{\lambda_1 t}\bigl(K_t(x,y)-\phi_0^2\bigr)\ \to \ 
\sum_{j=1}^m \phi_j(x)\phi_j(y)=:P_{\lambda_1}(x,y)
\end{equation}
uniformly for $x,y\in M$ (see Section~\ref{sec-aux}). The sum on the right hand side is the integral 
kernel of the spectral projection associated with the principal eigenvalue.
If $K_t$ is geodesically decreasing
for some sequence of times $t=t_k\to \infty$, 
then the spectral projection is geodesically 
decreasing as well. We will show in Lemma~\ref{lem-principal-m} that
this monotonicity implies
\begin{equation}
\label{eq:constant}
\phi_1(x)^2+\cdots+\phi_m(x)^2=\frac{m}{\Vol(M)}
\,. \end{equation}
When $m=2$, the joint level sets of the eigenfunctions $\phi_1,\phi_2$ form a foliation of $M$ that we use to construct the claimed isometry
(see equation~\ref{eq:level-theta}).

In known cases where monotonicity holds, all non-trivial eigenvalues have large multiplicity. For instance, on the 
standard sphere the $\ell$-th eigenspace consist of the spherical harmonics of order~$\ell$, whose dimension grows polynomially with $\ell$.
On the real projective space $\RR\PP^n$ the $\ell$-th eigenspace consists of spherical harmonics of order $2\ell$. On tori $a\SS^1\times b\SS^1$, the multiplicity of all nontrivial eigenvalues is even.
This might be connected to the following necessary condition.

\begin{theorem} 
\label{thm-constant}
Let $(M,g)$ be a compact connected Riemannian manifold whose heat kernel is geodesically decreasing for some non-repeating positive sequence $(t_k)_{k\ge 1}$
of times that
does not converge to zero.
Then equation~\eqref{eq:constant} holds whenever $\phi_1,\ldots,\phi_m$ is an orthonormal basis for the eigenspace of some positive eigenvalue
of $-\Delta_g$.
\end{theorem}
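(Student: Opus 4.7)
The plan is to reduce~\eqref{eq:constant} to the claim that for every distinct positive eigenvalue $\mu$ of $-\Delta_g$, the diagonal of the spectral projection kernel
\[
P_\mu(x,y):=\sum_{j:\lambda_j=\mu}\phi_j(x)\phi_j(y)
\]
is constant on $M$. Indeed, if $\lambda>0$ has multiplicity $m$ and $(\phi_1,\dots,\phi_m)$ is an orthonormal basis of its eigenspace, then $P_\lambda(x,x)=\sum_{j=1}^m\phi_j(x)^2$, which integrates to $m$ over $M$; constancy then fixes the value at $m/\Vol(M)$, which is exactly~\eqref{eq:constant}. Since $M$ is connected, it suffices to show $\nabla_xP_\mu(x,x)\equiv 0$, and by the symmetry $P_\mu(x,y)=P_\mu(y,x)$ this is equivalent to $\nabla_yP_\mu(x,y)|_{y=x}=0$ at every point.

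The first step is to establish the analogous vanishing for the heat kernel at each time $t_k$. Fix $x\in M$ and a unit vector $v\in T_xM$, and let $\gamma$ be the geodesic with $\gamma(0)=x$ and $\dot\gamma(0)=v$, which is minimal for small $s\ge 0$. Geodesic monotonicity at $t=t_k$ yields $K_{t_k}(x,\gamma(s))\le K_{t_k}(x,x)$ with equality at $s=0$, so the right derivative at $s=0$ satisfies $\langle\nabla_yK_{t_k}(x,y)|_{y=x},v\rangle\le 0$. Repeating the argument along the geodesic in direction $-v$ gives the reverse inequality, so the directional derivative vanishes; since $v$ was arbitrary, $\nabla_yK_{t_k}(x,y)|_{y=x}=0$ at every $x\in M$ and every $k$.

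Next I invoke the spectral expansion~\eqref{eq:spectral}. For fixed $x\in M$ and $v\in T_xM$, define the Dirichlet series
\[
F(t):=\sum_{\mu>0}e^{-\mu t}\,\langle\nabla_yP_\mu(x,y)|_{y=x},v\rangle,
\]
where the sum runs over distinct positive eigenvalues. The eigenfunction $L^\infty$ and derivative bounds combined with Weyl's asymptotic law (as recalled in Section~\ref{sec-aux}) ensure that this series and its $t$-derivatives converge absolutely on $(0,\infty)$, so $F$ is real-analytic there and coincides with $\langle\nabla_yK_t(x,y)|_{y=x},v\rangle$. Step one therefore gives $F(t_k)=0$ for every $k$.

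To conclude, I exploit the hypothesis on $(t_k)$. Because the sequence is non-repeating and does not converge to zero, after passing to a subsequence it tends to some $t^*\in(0,\infty]$. If $t^*$ is finite, the identity theorem for real-analytic functions forces $F\equiv 0$ on $(0,\infty)$; if $t^*=+\infty$, multiplying $F(t_k)$ by $e^{\mu t_k}$ for the smallest eigenvalue $\mu$ appearing with a nonzero coefficient and letting $t_k\to\infty$ forces that coefficient to vanish, and iterating peels off all the rest. In either case, uniqueness of Dirichlet-series coefficients yields $\langle\nabla_yP_\mu(x,y)|_{y=x},v\rangle=0$ for every positive eigenvalue $\mu$. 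Since $x$ and $v$ were arbitrary, $\nabla_xP_\mu(x,x)\equiv 0$, completing the proof. The only genuinely nontrivial idea is the use of opposite geodesic directions $v$ and $-v$ in step one, which upgrades the inequality coming from monotonicity to an identity; the technical point that deserves care is the termwise differentiation of $F$, which rests on uniform eigenfunction bounds.
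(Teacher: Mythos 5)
Your argument is correct and essentially reproduces the paper's own proof: the first-derivative consequence of monotonicity at the diagonal, then uniqueness of the coefficients of the spectral Dirichlet series along the sequence $(t_k)$ (an accumulation point in $(0,\infty)$ handled by real-analyticity in $t$, and $t_k\to\infty$ by peeling off the lowest surviving eigenvalue), and finally connectedness plus integration over $M$ to get \eqref{eq:constant}. The only real deviation is that you differentiate the eigenfunction expansion termwise in $y$, which requires polynomial $C^1$ bounds on eigenfunctions (standard via elliptic estimates and Sobolev embedding, but not among the bounds the paper quotes), whereas the paper's Lemma~\ref{lem-const} applies the derivative test directly to the smooth kernel $K_{t_k}$, concludes that the diagonal values $K_{t_k}(x,x)$ are constant in $x$, and then passes only these values through the expansion, so no interchange of gradient and sum is needed.
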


The conclusion of~Theorem~\ref{thm-constant} implies
that {\em all} positive eigenvalues of $-\Delta_g$ 
have multiplicity greater than one, as follows.
If $\lambda$ is a simple eigenvalue, then the corresponding 
eigenfunction is constant by equation \eqref{eq:constant}.
Since the constants form the 
eigenspace for $\lambda_0=0$,  the only simple eigenvalue 
of $-\Delta_g$ is the trivial one. 
Thus
Theorem~\ref{thm-constant} implies Theorem~\ref{thm-simple}.

However, under the hypothesis of Theorem~\ref{thm-constant}, if $\lambda$ is a double eigenvalue
we do not recover the conclusion of Theorem~\ref{thm-product} that $M$ is {\em isometric} to a product. Instead, we only obtain a diffeomorphism between $M$ and a mapping torus, see Proposition~\ref{prop-double}. 

The paper is organized as follows. Sections~\ref{sec-honeycomb} and~\ref{sec-flat} are devoted to flat surfaces and the proof of Theorem~\ref{thm-flat}.
In Section~\ref{sec-aux} we derive necessary conditions for geodesic monotonicity  and prove Theorem~\ref{thm-constant}. In Section~\ref{sec-double}, we discuss double eigenvalues in detail, and prove Theorem~\ref{thm-product}.

\section{\bf The new example}
\label{sec-honeycomb}

In this section, we establish monotonicity of the heat kernel on the flat torus $\RR^2/\Lambda$,
where $\Lambda$ is the regular triangular lattice generated by
$e_1=(1,0)$ and  $\zeta=(-\frac12,\frac{\sqrt{3}}{2})$, see Figure~\ref{fig-honeycomb}.
The Voronoi cell of the origin
$$
C:=\Bigl\{x\in\RR^2\ \Big\vert\  |x|= \min_{\ell\in L} |x-\ell|\Bigr\}
$$
is a fundamental region for the translations in $\Lambda$.
Every line segment in $C$ emanating from the origin
corresponds to a minimal geodesic in the torus; the boundary of $C$ corresponds to the cut locus of the origin.

By symmetry, $C$ is a regular hexagon, formed by the
intersection of the strips $\{|x\cdot e_1|\le \frac12\}$,
$\{|x\cdot \zeta|\le \frac12\}$, and  $\{|x\cdot \bar \zeta|\le \frac12\}$
where $\bar\zeta=(-\frac12,-\frac12\sqrt{3})$.
In reference to the tiling of the plane by translates of $C$, we call $\Lambda$ the {\em honeycomb lattice}
and $\RR^2/\Lambda$ the {\em honeycomb torus}.
The honeycomb torus can also be obtained by identifying 
opposite sides of $C$. 

Here is the main result of this section.
\begin{prop}\label{prop-honeycomb}
The heat kernel on the honeycomb torus is geodesically decreasing
for every $t>0$.
\end{prop}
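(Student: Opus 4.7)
The plan is to combine the $D_6$ symmetry of the honeycomb lattice with the parabolic maximum principle, in the spirit of Theorem~\ref{thm-decreasing-point}.

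Since $\Lambda$ is invariant under the dihedral group $D_6$ of order~$12$ acting about the origin, so is $K_t(0,\cdot)$. The Voronoi hexagon $C$ decomposes into $12$ congruent copies of the $30\text{-}60\text{-}90$ right triangle $T=\triangle(O,V_0,M_0)$, where $V_0=(\tfrac12,\tfrac{1}{2\sqrt3})$ and $M_0=(\tfrac12,0)$. It therefore suffices to prove that $K_t(0,\cdot)$ is strictly decreasing along every ray from $O$ into $T$, i.e.~for each $\theta\in[0,\pi/6]$ and $v=(\cos\theta,\sin\theta)$, the function $s\mapsto K_t(0,sv)$ is strictly decreasing on $[0,\tfrac{1}{2\cos\theta}]$.

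The next step is to establish that $K_t(0,\cdot)$ satisfies Neumann boundary conditions on all three edges of $T$. The two edges through $O$ lie on $D_6$-axes of reflection, which yields Neumann there immediately. For the third edge $M_0V_0\subset\{x=\tfrac12\}\subset\partial C$, we combine the $\Lambda$-periodicity (translation by $-e_1$) with the $D_6$-reflection $R_y$ across the $y$-axis:
\[
K_t(0,(\tfrac12+\epsilon,y_2)) \;=\; K_t(0,(-\tfrac12+\epsilon,y_2)) \;=\; K_t(0,(\tfrac12-\epsilon,y_2)),
\]
so $\partial_x K_t(0,\cdot)\equiv 0$ on $\{x=\tfrac12\}$. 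In other words, the restriction of $K_t(0,\cdot)$ to $T$ agrees, up to a factor of $\tfrac{1}{12}$, with the Neumann heat kernel on $T$ issued from the corner $O$, which admits the image representation $c\sum_{\ell\in\Lambda}G_t(y-\ell)$ with $G_t$ the Euclidean Gaussian.

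The core of the proof is to apply the parabolic maximum principle to a directional derivative of $K_t$. For each $v$ as above, the function $w(t,y):=v\cdot\nabla_y K_t(0,y)$ solves the heat equation; one wishes to show $w(t,sv)\le 0$ on the ray. This is obtained from the parabolic maximum principle on a suitably chosen wedge-shaped subdomain $\Omega\subset T$ containing the ray of interest: on axes of $D_6$-symmetry bounding $\Omega$, $w$ has the correct sign by a direct computation (using the antisymmetry $\nabla K_t(0,-y)=-\nabla K_t(0,y)$ coming from $K_t(0,y)=K_t(0,-y)$); on $\partial T\cap\partial C$ the Neumann condition from the previous paragraph supplies the needed boundary control; and the initial datum as $t\to 0^+$ is non-positive on the relevant half-plane, because $K_t(0,y)=G_t(y)+O(e^{-c/t})$ uniformly on compact subsets of $C\setminus\{0\}$, and $v\cdot\nabla G_t(y)=-\tfrac{v\cdot y}{2t}\,G_t(y)\le 0$ whenever $v\cdot y\ge 0$. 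Strict monotonicity in $s$ then follows from the strong maximum principle.

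The hardest step is to carry out this final argument uniformly in $\theta\in[0,\pi/6]$: the choice of $\Omega$ must be compatible with the initial sign, the sharp $\pi/6$ corner at $O$ requires care in the boundary regularity, and the singular initial datum at $t=0^+$ has to be handled. All three issues are addressed by a standard regularization --- shift the initial time to some $t=\epsilon>0$, where $K_\epsilon$ is smooth and bounded on $T$, apply the maximum principle on $[\epsilon,\infty)$, and let $\epsilon\to 0^+$ at the end.
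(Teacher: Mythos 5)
Your reduction to the $30$-$60$-$90$ fundamental triangle (your $T$ is the paper's $D$), the reflection/periodicity identities on its edges, and the plan of applying the parabolic maximum principle to a directional derivative $w=v\cdot\nabla_yK_t(0,y)$ all match the paper's strategy; but the decisive step is missing, and the justification you give for it does not work. To run the maximum principle for $w$ on a wedge $\Omega\subset T$ you must control the sign of $w$ on the parabolic boundary, in particular on the two radial edges (the horizontal axis and the bisector at angle $\pi/6$). On a reflection axis, the symmetry of $K_t(0,\cdot)$ only forces the \emph{normal} component of the gradient to vanish; the sign of the \emph{tangential} component along those axes is precisely the monotonicity you are trying to prove, so asserting it ``by a direct computation using $\nabla K_t(0,-y)=-\nabla K_t(0,y)$'' is circular --- the antipodal antisymmetry relates the gradient at $y$ and $-y$ and yields no sign information on an edge of $T$. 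The outer edge $\{x_1=\tfrac12\}$ has the same problem: Neumann gives $\partial_{x_1}K_t=0$ there, but $w$ still contains $\sin\theta\,\partial_{x_2}K_t$, whose sign is again part of the conclusion, and $w$ is neither symmetric nor antisymmetric under reflection across that line (unless $\theta\in\{0,\pi/2\}$), so no reflection trick pushes a boundary maximum into an interior point. This is exactly where the paper's proof does its real work: it runs the maximum principle simultaneously for the two partials $v_1=\partial_{x_1}u$ and $v_2=\partial_{x_2}u$, using that $v_1$ vanishes on the vertical edge and is symmetric across the horizontal one, $v_2$ vanishes on the horizontal edge and is symmetric across the vertical one, and on the diagonal edge the gradient is tangent so $v_1:v_2=\sqrt3:1$; a putative positive-time maximum of either function is forced onto the diagonal, maximality of both at the same point makes the tangential derivative $\tau\cdot\nabla u$ attain its maximum there, and since $\tau\cdot\nabla u$ is symmetric across the diagonal this maximum lies in the interior of the doubled region $D''$, contradicting the strong maximum principle. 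Nothing in your wedge argument substitutes for this step, and once you have $v_1,v_2\le0$ on $D$ the statement for every ray direction follows immediately, so the ``uniformity in $\theta$'' you flag as the hardest point is not actually the difficulty.

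A secondary gap is the initial/regularized data. The claim $K_t(0,y)=G_t(y)+O(e^{-c/t})$ ``uniformly on compact subsets of $C\setminus\{0\}$'' fails near $\partial C$, where the nearest image Gaussian has exactly the same size as the principal one (on $\{x_1=\tfrac12\}$ one has $|y-e_1|=|y|$), so the non-positivity of $w$ at small times near the cut locus does not follow as stated; and shifting the initial time to $t=\epsilon$ does not help, since the sign of $w(\epsilon,\cdot)$ on the wedge is again the statement to be proven. The paper sidesteps both issues by approximating the point mass with a periodized radially decreasing bump supported in a small disc, for which $\partial_{x_1}u_0\le0$ and $\partial_{x_2}u_0\le0$ on $D$ hold by construction, running the argument for these smooth solutions, and only then passing to the heat kernel (strictness being recovered afterwards via the semigroup property).
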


We are grateful to an anonymous referee for 
pointing out a related result of Baernstein that affirms that 
for each \mbox{$t>0$} the heat kernel
$K_t(x,y)$ attains its global minimum at  
any pair of points of maximal 
distance~\cite[Theorem 1]{B-1997}. Although our result seems stronger, Baernstein's proof already contains as a key step the reduction we do
 in the proof of Proposition~\ref{prop-honeycomb} below. 
 Starting from an explicit representation for the heat kernel 
 on the torus, Baernstein 
 combines a convexity argument with the maximum principle to show 
 that certain directional derivatives are positive. This is in fact what we achieve from geometric considerations for the heat
 equation alone, without referring to special identities for the solution.

We follow Alonso-Or\'an, Chamizo, Mas, and 
Mart\'inez~\cite{ACMM} and directly apply the
parabolic maximum principle to the heat kernel. 
We will need the strong form
(cf. \cite[Theorem 4 (ii) on p.~54]{E}), paraphrased
here for the convenience of the reader: 

\noindent{\bf Strong maximum principle.}\ 
{\em Let $v$ be a classical solution of the heat
equation $\partial_tv=\Delta v$ on a cylinder $U\times (0,T)$,
where $U$ is a connected domain and $T>0$.
Assume that $v$ is continuous on the closed cylinder
$\overline{U}\times[0,T]$.  If $v$ attains its maximum at a point
$(x^*,t^*)$ with $x^*\in U$ and $t^*>0$, then 
$$ v(x,t)= v(x^*,t^*)$$
for all $x\in \overline U$, $0\le t\le t^*$.}

\begin{proof} [Proof of Proposition~\ref{prop-honeycomb}]
 Let $\Lambda$ be the 
lattice generated by the basis 
$\{e_1,\zeta\}$.
We need to show that $K_t(\gamma(0),\gamma(s))$ 
is nondecreasing along any minimal geodesic $\gamma$.
By translation, we may assume that $\gamma(0)=(0,0)$.

We consider $K_t((0,0),x)$ as a function
on $\RR^2$ that is $\Lambda$-periodic,
$$
K_t((0,0),x+\ell)=K_t((0,0),x)\,,\qquad (x\in\RR^2,\ell\in \Lambda)\,.
$$
We approximate it by a smooth solution
of the heat equation $\partial_t u=\Delta u$,
with initial values $u(x,0)=u_0(x)$
that are symmetric under all isometries of 
$\RR^2$ mapping the honeycomb lattice $\Lambda$ to itself. These symmetries 
are generated by the reflection symmetries of $C$, combined with the 
lattice translations. By uniqueness of the solution of this initial-value problem, $u(\cdot, t)$ shares these symmetries.
We will prove that
\begin{equation}
x\cdot\nabla_x K_t((0,0),x)< 0
\label{eq:strict}
\end{equation}
for all $x\ne 0$ in the interior of $C$ and all $t>0$. In the proof, we first establish monotonicity,
and then revisit the argument to obtain strict monotonicity.

\begin{figure}
 \centering\includegraphics{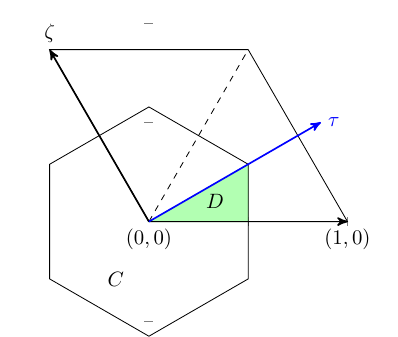}
 
\caption{\small Two fundamental domains for the standard honeycomb lattice. The parallelogram is spanned by the basis vectors $e_1=(1,0)$ and $\zeta=(-\frac12,\frac12\sqrt{3})$.
The boundary of the
regular hexagon $C$
corresponds to the cut locus of the origin in the honeycomb torus
$\RR^2/\Lambda$. The lattice is symmetric under reflection across the three lines that form the sides of $D$.}
    \label{fig-honeycomb}
\end{figure}

Let $D=\{x\in C: 0\le \text{arg}\,x\le \frac{\pi}{6}\}$ 
be the small right-angled triangle
shaded green in Figure~\ref{fig-honeycomb}.
This is a fundamental region for
the action of 
the
symmetries of the lattice
on the plane.
Taking advantage of these symmetries, it 
suffices to consider $x\in D$.

As in ~\cite[Section 2]{ACMM}, we apply the maximum principle
to certain directional derivatives of $u$. The partial derivatives
$v_1:=\partial_{x_1}u$ and $v_2:=\partial_{x_2}u$ satisfy
the heat equation on $\RR^2$. 
By symmetry of $u$, the function $v_1$ vanishes on the vertical 
edge of $D$ and is reflection-symmetric
across the horizontal edge. Similarly, $v_2$ 
vanishes on the horizontal edge and
is reflection-symmetric across the vertical edge. On the diagonal edge that forms the hypotenuse of $D$,
the spatial gradient
$\nabla u=(v_1,v_2)^t$ is tangent to $D$ due to the
reflection symmetry of~$u$. 
Therefore, on this edge,
the partials are in the ratio $v_1 : v_2=\sqrt{3}:1$.
Clearly, $v_1$ and $v_2$ vanish at the vertices of $C$ and at the midpoints of edges, as well as the origin, 
all of which are critical points for~$u$.

Our first claim is that if 
$\partial_{x_1}u_0\le 0$ and $\partial_{x_2}u_0 \le 0$
then $v_1\le 0$ and $v_2\le 0$ on $D\times[0,\infty)$. 

Suppose that
$v_1$ attains its maximum over the compact set $D\times [0,T]$ at a certain point $(x^*,t^*)$. We will argue by contradiction that $t^*=0$. If instead $t^*>0$,
then, by the strong maximum principle, $x^*$ lies on the boundary of~$D$.  
We know that it lies in the interior of
some edge, because $u$ has critical 
points at the origin and at the vertices of $D$.
Let $D'$ be the union of $D$ and its reflection
across the horizontal edge. 
Since $v_1$ is symmetric under this reflection and vanishes on the vertical edge, it attains its maximum
on both diagonal edges of $D'$.
Thus $x^*$ lies on the
diagonal edge of $D$ that it shares with $D'$.   

Conversely,                              
         if $v_2$  attains its maximum at 
a point $(x^{**},t^{**})$ with $t^{**}>0$,
then
by the same reasoning as for $v_1$ (but reflecting
across the vertical edge of $D$) we see that
$x^{**}$ lies on the diagonal edge of $D$ as well.

By maximality of $v_1(x^*,t^*)$, 
$$
v_1(x^*,t^*)\ \ge \ v_1(x^{**},t^{**}) \ =\  \sqrt{3}v_2(x^{**}, t^{**})\,,
$$
where we have used the fixed ratio between $v_1$ and $v_2$ 
on the diagonal edge.  On the other hand, 
by maximality of $v_2(x^{**},t^{**})$,
$$
v_1(x^*,t^*)\ =\ \sqrt{3}v_2(x^*,t^*)\ \le \sqrt{3}
v_2(x^{**}, t^{**})\,.
$$
We conclude that both inequalities hold with
equality, and $v_1$ and $v_2$ simultaneously
attain their maxima over $D\times[0,T]$ at $(x^*, t^*)$ (and
also at $(x^{**}, t^{**})$).

Let $\tau=(\frac12\sqrt{3},\frac12)$ be the unit
vector along the diagonal edge. 
Since both components of $\tau$ are positive,
the directional derivative
$$v:=\tau\cdot \nabla u = \frac12\sqrt{3} v_1+
\frac12 v_2$$
also attains  its maximum over $D\times [0,T]$
at $(x^*,t^*)$. Consider the quadrilateral
$D''$ formed by the union of $D$ and its reflection 
across the diagonal edge.  Since $v$ is symmetric
under reflection at the diagonal edge of $D$
this means that $v(x^*,t^*)$ is the maximum for
$v$ on $D''\times[0,T]$.
But $x^*$ is an interior point for $D''$,
contradicting the strong maximum principle.

This contradiction shows that necessarily $t^*=0$.
 
As a consequence 
$v_1,v_2$ are nonnegative  on~$D$ for all $t\ge 0$.
Since the coordinates of any point $x
\in D$ are nonnegative, this implies that $x\cdot\nabla u(x,t)\le 0$ on $D$, 
proving the first claim.

\smallskip
To conclude the proof of monotonicity we approximate $K_t((0,0),x)$ 
by the solution of the heat equation with
initial data $$u(x,0)= \sum_{\ell\in \Lambda} u_0(x+\ell),$$
where $u_0$ is a radially decreasing function
of unit total mass that is supported on a small disk about the origin
in the interior of $C$.
Then $\partial_{x_1}u_0\le 0$ and $\partial_{x_2}u_0\le 0$
on $D$, and neither vanishes identically.
Therefore our previous arguments apply, and the solution satisfies
$x\cdot \nabla u(x,t)<0$ for $x\ne 0$ in $D$ and
all $t>0$.  We let the initial condition $u_0$ converge
to a point measure to obtain the monotonicity
property for the heat kernel. 

\smallskip
For strict monotonicity, assume additionally that
$\partial_{x_1}u_0$ and $\partial_{x_1}u_0$ do not vanish identically on $D$,
i.e., $v_1$ and $v_2$ have  non-trivial initial values. We already know that $v_1, v_2\le 0$ on $D$ for all $t\ge 0$.
We claim that for $t>0$, $v_1$ vanishes on $D$ only on the vertical edge and at $x=0$, 
and $v_2$ vanishes only on the horizontal edge and at the upper vertex.

Suppose that $v_1(x^*,t^*)=0$ for some $t^*>0$.
Since $v_1(\cdot,0)$ is not identically zero,
by the strong maximum principle $x^*$ cannot lie
in the interior of $D$. As in the proof of the first claim, by considering reflections across the horizontal and diagonal edges, we see that it cannot lie on the interior of those edges, either.

Therefore $v_1$ vanishes only on
the vertical edge and at $0$.
In  the same way
we find that $v_2$ vanishes
only on the horizontal edge and at the upper vertex.
It follows that $x\cdot \nabla u(x,t)<0$
for all $t>0$ and all $x\in D$ except
at its vertices.

\smallskip
To prove strict monotonicity for $K_t((0,0),x)$, we recall that
$$
\partial_{x_1}K_t((0,0),x)\le 0 \quad\text{and}\quad 
\partial_{x_2}K_t((0,0),x)\le 0 
$$
on $D$, for all $t>0$. Moreover, the partial derivatives
$\partial_{x_1}K_t((0,0),x)$ and $\partial_{x_2}K_t((0,0),x)$
cannot vanish identically on $D$, since for each $t>0$ the heat kernel
is a non-constant smooth function of $x$
that is symmetric under reflection across the sides of $D$.  Fix $\eps>0$.
By the semigroup property,
$u(x,t):= K_{t+\eps}((0,0),x)$ solves the heat equation. Since 
its initial values $K_\eps((0,0),x)$ are non-increasing and non-constant in both variables on $D$, it 
follows that $x\cdot\nabla u(x,t)<0$ for $t>\eps$, for 
all $x\in D$ except the vertices. We finally take $\eps\to 0$ to obtain equation~\eqref{eq:strict} on $D$, and hence $C$, for all $t>0$.
\end{proof}

It is clear from the proof of Proposition~\ref{prop-honeycomb} 
that for each $t>0$,
the heat kernel $K_t((0,0),x)$ has a 
maximum at $x=0$, minima at the vertices of $C$, saddle points
at the midpoints of the edges, and no other critical points on~$C$.
This corresponds to one local maximum, two local minima,
and three saddle points on the honeycomb torus $\RR^2/\Lambda$,
We confirm that their indices add up
to $1-3+2=0$, the Euler characteristic of the torus.

\section{\bf Non-monotonicity on flat surfaces}\label{sec-flat}

\subsection{Tori}

In this section, we investigate geodesic monotonicity
of the heat kernel on  flat tori of the
form $\RR^2/\Lambda$, where $\Lambda$ is a lattice of 
rank two, and prove Theorem~\ref{thm-flat}.
The heat kernels on such tori can be expressed
in terms of the heat kernel on $\SS^1$, though their monotonicity properties are not obvious from the formulas (see, e.g.~\cite[p. 231]{B-1997}).
We need some explicit information about the spectrum of
the Laplacian on such surfaces,
summarized below.

Let us recall that a planar lattice $\Lambda$ is a discrete subgroup
of $\RR^2$, which acts on itself by translation. The lattice has rank two, if it
consists of all integer
linear combinations of a pair of linearly independent
vectors. Such a pair is called
a {\em basis} for $\Lambda$. Since geodesic monotonicity is invariant under scaling, we assume  without loss of generality that the
length of the smallest element in the basis is normalized to one. Taking advantage of rigid rotations
and a change of basis, we may also assume
that the basis vectors are
$(1,0)$ and $(-a,b)$, where
$0\le a\le \frac12$, $b> 0$, and $a^2+b^2\ge 1$.
With this choice of parameters,
$ (1,0)$ minimizes the modulus among non-zero elements of $\Lambda$, and $(-a,b)$ minimizers the modulus among elements linearly 
independent of $(1,0)$. The rectangular lattices correspond to 
the parameter values $a=0$, $b\ge 1$, 
and the standard honeycomb lattice corresponds to  $a=\frac12$, $b=\frac12\sqrt{3}$. 

The spectrum of the Laplacian on $\RR^2/\Lambda$ is conveniently
expressed in terms of the {\em dual lattice} $\Lambda'$, 
consisting of all points $\ell\in\RR^2$
such that $\ell\cdot x$ is an integer for every $x\in \Lambda$.
An orthogonal set of eigenfunctions
is given by
$$
\phi_\ell(x)= e^{2\pi i \ell\cdot x}\,, \qquad (\ell\in \Lambda')\,.
$$
Notice that these are orthogonal in $L^2$ but should be 
normalized by a factor $b^{-1/2}$, to compensate for the area spanned by the basis vectors. 
The corresponding eigenvalues
are $\lambda_\ell=(2\pi|\ell|)^2$.
Since $\pm\ell\neq 0$ give rise to the
same eigenvalue, the multiplicity of
every non-trivial eigenvalue is even,
and on the corresponding eigenspace, any orthonormal basis satisfies equation~\eqref{eq:constant}. 

The basis of $\Lambda'$ dual to 
$\{(1,0), (-a,b)\}$
 consists of the vectors $v_1~=~(1,b^{-1}a)$
and $v_2=(0, b^{-1})$. Here, $v_2$ minimizes the
modulus among non-zero elements of $\Lambda'$, and $v_1$ minimizes 
the modulus among elements linearly independent of $v_2$.
Incidentally we have proved:

\begin{lemma}\label{lemaprincipal}
Let $\Lambda$ be the planar lattice generated by the
basis vectors $(1,0)$ and $(-a,b)$. If $0\le a\le 1$ and $a^2+b^2>1$, then the principal eigenvalue of the Laplacian on $\RR^2/\Lambda$ is $\lambda_1=(2\pi/b)^2$.
\end{lemma}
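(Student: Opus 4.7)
The plan is to use the spectral description already established: the eigenvalues of $-\Delta$ on $\RR^2/\Lambda$ are exactly the numbers $(2\pi|\ell|)^2$ for $\ell\in\Lambda'$. Consequently $\lambda_1=(2\pi\mu)^2$ with $\mu$ the shortest length of a nonzero element of $\Lambda'$, and it suffices to prove $\mu=1/b$.

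The upper bound $\mu\le 1/b$ is immediate since $v_2=(0,b^{-1})$ lies in $\Lambda'$. For the reverse, I would parametrize a general nonzero dual vector as $\ell=mv_1+nv_2=\bigl(m,(ma+n)/b\bigr)$ with $(m,n)\in\ZZ^2\setminus\{(0,0)\}$, expand $|\ell|^2=m^2+(ma+n)^2/b^2$, and split on $|m|$.

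The case $m=0$ is trivial: $|\ell|^2=n^2/b^2\ge 1/b^2$. When $|m|=1$, the standing normalization $0\le a\le \tfrac12$ (which I take to be the intended reading of the printed hypothesis $0\le a\le 1$) makes $n=0$ the integer minimizer of $(ma+n)^2$, so that $(ma+n)^2\ge a^2$; therefore $|\ell|^2\ge 1+a^2/b^2=(a^2+b^2)/b^2>1/b^2$, and this is precisely where the strict assumption $a^2+b^2>1$ enters. When $|m|\ge 2$, one simply has $|\ell|^2\ge 4$, and this exceeds $1/b^2$ because $a\le\tfrac12$ together with $a^2+b^2\ge 1$ forces $b\ge \tfrac{\sqrt3}{2}>\tfrac12$.

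The only nontrivial step is the $|m|=1$ case, where the hypothesis $a^2+b^2>1$ is used crucially; the remaining cases are essentially bookkeeping. I do not anticipate any serious obstacle, since the preceding paragraph has already singled out $v_2$ as the candidate minimizer of $\Lambda'$, and the lemma amounts to a clean verification of that claim.
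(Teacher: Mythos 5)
Your argument is correct and follows essentially the paper's route: both identify $\lambda_1$ as $(2\pi\mu)^2$ with $\mu$ the length of a shortest nonzero vector of the dual lattice $\Lambda'$ and then verify that $v_2=(0,b^{-1})$ realizes it; the paper simply asserts this from the reduced-basis normalization (``incidentally we have proved''), whereas you carry out the minimization over $\ell=mv_1+nv_2$ explicitly, which is the natural justification of that assertion. Your reading of the hypothesis as $0\le a\le\tfrac12$ is indeed the intended one, since it is the paper's standing normalization and the statement as printed would fail otherwise (e.g.\ for $a=1$, $b=\tfrac12$ the dual lattice contains $(1,0)$, which is shorter than $v_2$).
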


\noindent We group the flat tori according to their lattice symmetries as follows.

{\em Lattices without special symmetries.}
If $0< a\le \frac12$ and $a^2+b^2>1$, then
the principal eigenvalue has multiplicity two. The corresponding eigenspace is spanned by
$\cos(2\pi x_2/b)$ and $\sin (2\pi x_2/b)$. 

{\em Isosceles lattices.}
If $0< a<\frac12$ and $a^2+b^2=1$, then
both basis vectors have unit length. 
This means that $\Lambda$ has additional symmetries, 
associated with the rhombus spanned by the basis vectors; these
symmetries manifest themselves as reflections on the 
torus $\RR^2/\Lambda$. The multiplicity of the principal eigenvalue is four. The eigenspace is spanned by 
$\cos(2\pi x_2/b)$, $\sin(2\pi x_2/b)$, 
$\cos(2\pi(bx_1+ax_2)/b)$, and $\sin(2\pi(bx_1+ax_2)/b)$.

{\em Rectangular and square lattices.} For $a=0$ and $b>1$, the principal eigenvalue has multiplicity two, and the eigenspace is spanned by $\cos(2\pi x_2/b)$ and $\sin (2\pi x_2/b)$. 
For the square lattice, where $a=0$ and $b=1$, the multiplicity of the principal eigenvalue is four. The eigenspace is spanned by $\cos(2\pi x_1)$, $\sin(2\pi x_1)$, $\cos(2\pi x_2)$, and $\sin (2\pi x_2)$.

{\em The honeycomb lattice.}
If $a=\frac12$ and $b=\frac12\sqrt{3}$, then
the points closest to zero in $\Lambda$ form 
a regular hexagon, and similarly for $\Lambda'$.
The multiplicity of the principal eigenvalue is six, and the eigenspace is
spanned by 
$\cos(2\pi x_2/b)$, $\sin(2\pi x_2/b)$, 
$\cos(2\pi(bx_1\pm ax_2)/b)$, and $\sin(2\pi(bx_1\pm ax_2)/b)$.

\noindent{\bf Remark.} On the honeycomb torus,
all  non-trivial eigenvalues of $-\Delta$
have multiplicity divisible by six.
The reason is that the dual lattice is 
symmetric under rotation by $\pi/3$, and 
the orbit of every non-zero point in the
plane under this rotation has length six.

\smallskip
With this classification it is clear that 
for the proof of Part (2) of Theorem~\ref{thm-flat}
we can focus on the first two cases. Although similar arguments apply, they need to be treated separately, as follows.

\begin{prop}[Lattices without special symmetries]
\label{prop-generic}
Let $\Lambda$ be the planar lattice generated by the basis vectors
$(1,0)$ and $(-a,b)$. If
$0<a\le\frac12$ and $a^2+b^2>1$ then for large $t$
the heat kernel on $\RR^2/\Lambda$ is not geodesically decreasing.
\end{prop}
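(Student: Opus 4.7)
The plan is to exploit the long-time asymptotics of the heat kernel through the spectral projection $P_{\lambda_1}$ and to exhibit a single minimal geodesic along which $P_{\lambda_1}$ fails to be non-increasing. Any geodesic monotonicity of $K_t$ for large $t$ would then, by passing to a subsequence $t_k\to\infty$, force $P_{\lambda_1}$ to inherit the same monotonicity along this geodesic, giving a contradiction.

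First, I would compute $P_{\lambda_1}$ explicitly. Since the torus $\RR^2/\Lambda$ has volume $b$, the orthonormal basis of the principal eigenspace obtained from the classification preceding the proposition is $\phi_1(x) = \sqrt{2/b}\,\cos(2\pi x_2/b)$ and $\phi_2(x) = \sqrt{2/b}\,\sin(2\pi x_2/b)$. The cosine addition formula then yields
$$
P_{\lambda_1}(x,y) \ =\ \phi_1(x)\phi_1(y)+\phi_2(x)\phi_2(y)\ =\ \frac{2}{b}\cos\!\left(\frac{2\pi(x_2-y_2)}{b}\right),
$$
so that $P_{\lambda_1}$ depends only on the vertical separation of $x$ and $y$.

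Next, I would look at the straight upward geodesic $\gamma(s) = (0,s)$ emanating from the origin. A short bookkeeping argument over the lattice points $(m-na,\,nb)$, using the normalization $0 < a \le \tfrac12$ to single out $(-a,b)$ as the closest nontrivial lift in the upper half plane, shows that $\gamma$ is a minimal geodesic precisely for $0 \le s \le s_* := (a^2+b^2)/(2b)$. Because $a > 0$ we have $s_* > b/2$, so the function
$$
s \ \longmapsto\ P_{\lambda_1}\bigl((0,0),\gamma(s)\bigr)\ =\ \frac{2}{b}\cos(2\pi s/b)
$$
strictly decreases on $[0, b/2]$ and then strictly increases on $[b/2, s_*]$; in particular it is not non-increasing on $[0, s_*]$.

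To conclude, suppose for contradiction that $K_{t_k}$ is geodesically decreasing along some sequence $t_k \to \infty$. Then for each $k$, the function $s \mapsto e^{\lambda_1 t_k}\bigl(K_{t_k}((0,0),\gamma(s)) - \phi_0^2\bigr)$ is non-increasing on $[0, s_*]$; by the uniform convergence~\eqref{eq:limit}, these functions converge uniformly as $k\to\infty$ to $s \mapsto P_{\lambda_1}((0,0),\gamma(s))$, which would therefore have to be non-increasing as well, contradicting the previous paragraph. The only mildly technical ingredient is the determination of the minimality range of $\gamma$; everything else is automatic from the spectral expansion.
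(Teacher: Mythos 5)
Your proposal is correct and follows essentially the same route as the paper: compute $P_{\lambda_1}(x,y)=\tfrac{2}{b}\cos(2\pi(x_2-y_2)/b)$ from the multiplicity-two classification, observe that the vertical geodesic from the origin stays minimal past half-height $b/2$ precisely because $a>0$, and conclude via the long-time limit \eqref{eq:limit} that geodesic monotonicity of $K_t$ along times $t_k\to\infty$ would force the (non-monotone) projection $P_{\lambda_1}$ to be non-increasing along that geodesic. Your explicit cut value $s_*=(a^2+b^2)/(2b)$ is the correct height of the cut point, and your spelled-out limiting argument is exactly the mechanism the paper sets up after equation \eqref{eq:limit}.
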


\begin{figure}
 \centering
 \includegraphics{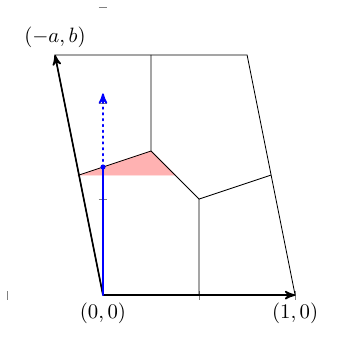}
\caption{ 
   \small
   Fundamental parallelogram of a lattice $\Lambda$ without special symmetries. The lattice basis  is $\{(1,0),(-a,b)\}$, where $0<a\le\frac12$, $b>0$, and
$a^2+b^2>1$. The thin lines correspond to the cut locus of the origin in the torus $\RR^2/\Lambda$. 
Geodesic monotonicity fails on the segment of the vertical geodesic emanating
from $(0,0)$ that lies inside the shaded triangle.}
    \label{fig-generic}
\end{figure}

\begin{proof} We will show that monotonicity fails
along a certain minimal geodesic emanating from the origin.
Let $\gamma(s)=(0, sb)$
for $0\le s\le 1$, as in the Figure~\ref{fig-generic}.
Since the distance of $(0,\frac{b}{2})$ 
from any non-zero lattice point strictly exceeds $\frac{b}{2}$,
the geodesic is minimal up to 
some $s^*>\frac12$. Explicitly,  the geodesic meets the cut locus at
$(0,\frac12(a^2+b^2))$, 
which is equidistant to the origin and to $(-a,b)$.

The spectral projection onto the eigenspace for $\lambda_1$ has integral kernel
$$
P_{\lambda_1}(x,y)
=\frac{2}{b} \,\cos(2\pi (x_2-y_2)/b)\,.
$$
Since 
$P_{\lambda_1}((0,0),\gamma(s)) = 2b^{-1} \cos(2\pi s)$
increases for $\frac12\le s\le s^*$, geodesic monotonicity fails.
\end{proof}

\begin{prop}[Isosceles lattices]
\label{prop-isosceles}
Let $\Lambda$ be the planar lattice 
generated by the basis vectors
$(1,0)$ and $(-a,b)$.
If $0<a<\frac12$ and $a^2+b^2=1$, then for large $t$
the heat kernel on $\RR^2/\Lambda$ is not geodesically decreasing.
\end{prop}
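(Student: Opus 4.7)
The strategy mirrors the proof of Proposition~\ref{prop-generic}: by the spectral expansion~\eqref{eq:limit} applied with $m=4$, it suffices to exhibit a minimal geodesic on $\RR^2/\Lambda$ along which the spectral projection $P_{\lambda_1}$ strictly increases somewhere, since then $K_t$ cannot be non-increasing along that geodesic for $t$ sufficiently large. The dual lattice is generated by $v_1=(1,a/b)$ and $v_2=(0,1/b)$, both of modulus $1/b$ because $a^2+b^2=1$. Hence the principal eigenspace has dimension four, spanned by the harmonics associated with $\pm v_1,\pm v_2$, and
\[
P_{\lambda_1}\big((0,0),x\big) \;=\; \frac{2}{b}\Big[\cos\big(2\pi(x_1+ax_2/b)\big)+\cos(2\pi x_2/b)\Big].
\]

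As test geodesic, take $\gamma(s)=(0,s)$. An elementary check of the perpendicular bisectors between the origin and each non-zero lattice point shows that for $a<\tfrac12$ the first cut point along this vertical ray is $s^*=1/(2b)$, realized by the lattice point $(-a,b)$; the other candidates $(1-a,b)$, $(\pm 1,0)$, etc.\ yield larger cut times. Along $\gamma$,
\[
h(s)\;:=\;P_{\lambda_1}\big((0,0),\gamma(s)\big)\;=\;\frac{2}{b}\big[\cos(2\pi as/b)+\cos(2\pi s/b)\big],
\]
and a direct calculation gives
\[
h'(s^*)\;=\;-\frac{4\pi}{b^2}\Big[a\sin\!\big(\tfrac{\pi a}{1-a^2}\big)+\sin\!\big(\tfrac{\pi}{1-a^2}\big)\Big].
\]

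I claim the bracket is strictly negative. Since $0<a<\tfrac12$, one has $\pi<\pi/(1-a^2)<4\pi/3$ and $\pi/(1-a^2)-\pi=\pi a^2/(1-a^2)$, hence $\sin(\pi/(1-a^2))=-\sin(\pi a^2/(1-a^2))$. Setting $\beta:=\pi a/(1-a^2)\in(0,2\pi/3)$, so that $\pi a^2/(1-a^2)=a\beta$, the bracket becomes $a\sin\beta-\sin(a\beta)$. But $\sin$ is strictly concave on $[0,\pi]$ with $\sin 0=0$, whence $\sin(a\beta)>a\sin\beta$ for every $a\in(0,1)$ and $\beta\in(0,\pi)$. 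Therefore $h'(s^*)>0$, and by continuity $h$ is strictly increasing on a one-sided neighborhood of $s^*$ inside $[0,s^*]$, contradicting geodesic monotonicity of $P_{\lambda_1}$. The main obstacle was to pick a geodesic whose cut point lies slightly past the half-period of one of the two cosines in $P_{\lambda_1}$, so that one term begins to rebound while the other still decreases; the vertical direction provides precisely this, and the resulting inequality reduces cleanly to the concavity of sine.
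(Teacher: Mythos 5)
Your proof is correct, but it takes a genuinely different route from the paper's. You use the same reduction (pass to the principal spectral projection $P_{\lambda_1}$ via the limit \eqref{eq:limit}, with multiplicity four and kernel $\tfrac{2}{b}[\cos(2\pi(x_1+ax_2/b))+\cos(2\pi x_2/b)]$, which matches the paper), but you test monotonicity along the \emph{vertical} geodesic $\gamma(s)=(0,s)$ — the same geodesic the paper uses for lattices without special symmetries in Proposition~\ref{prop-generic} — and your cut-time computation $s^*=\tfrac{1}{2b}$, realized by $(-a,b)$, is right. The sign of $h'(s^*)$ then reduces to the inequality $\sin(a\beta)>a\sin\beta$ with $\beta=\pi a/(1-a^2)\in(0,2\pi/3)$, which follows from strict concavity of sine on $[0,\pi]$; your bookkeeping ($\sin(\pi/(1-a^2))=-\sin(\pi a^2/(1-a^2))$, $\pi a^2/(1-a^2)=a\beta$) checks out. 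The paper instead picks the geodesic from the origin to the circumcenter $z^*$ of the triangle $O,A,B$, which lies on the long diagonal of the rhombus; there the reflection symmetry forces $\nabla P(z^*)$ to be a positive multiple of the diagonal direction $\xi$, and orthogonality of the rhombus diagonals gives $z^*\cdot\nabla P(z^*)>0$ with no trigonometric estimate at all. So the trade-off is: your argument is more computational but elementary and uniform in $a$ (a single concavity inequality), while the paper's is coordinate-light and exploits the extra rhombic symmetry that distinguishes the isosceles case; both yield a strict increase of $P_{\lambda_1}$ on the final stretch of a minimal geodesic, hence failure of geodesic monotonicity of $K_t$ for all large $t$.
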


\begin{figure}[t]
 \centering
 \includegraphics{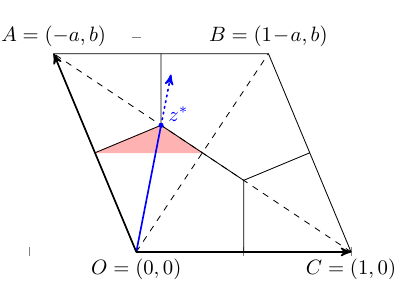}
\caption{
\small Fundamental parallelogram 
of an isosceles lattice $\Lambda$ with basis $\{(1,0),(-a,b)\}$, where $0<a<\frac{1}{2}$, $b>0$, and 
$a^2+b^2=1$. In addition to translation along the basis vectors, the lattice is symmetric under reflection at 
the diagonals (dashed), both of which are longer than the sides.
The thin lines inside the parallelogram correspond to the cut locus of the origin in $\RR^2/\Lambda$. The point $z^*$ is equidistant
to $O$, $A$, and $B$.
Geodesic monotonicity fails on the segment
of the geodesic connecting $O$ to $z^*$ that lies inside the 
small shaded triangle.}
    \label{fig-isosceles}
\end{figure}

\begin{proof} As in the proof of Proposition~\ref{prop-generic},
we show that the projection $P_{\lambda_1}(x,y)$ violates
monotonicity along some geodesic emanating from the origin, see Figure~\ref{fig-isosceles}. By Lemma~\ref{lemaprincipal} the principal eigenvalue 
is $\lambda_1=(2\pi/b)^2$, with multiplicity four as we are dealing with isosceles lattices according to our previous classification. The kernel of the projection onto the eigenspace of $\lambda_1$ simplifies to
$P_{\lambda_1}(x,y)=P(x-y)$, where
$$
P(x) = \frac{2}{b}\Bigl(\cos(2\pi x_2/b) +
\cos(2\pi (bx_1+ax_2)/b) \Bigr)\,.
$$
We will show that $P$ increases on the final segment of a certain minimal geodesic emanating from the origin.

Denote the vertices of the fundamental parallelogram by 
$O=(0,0)$, $A=(-a,b)$, 
$B=(1-a,b)$, and  $C=(1,0)$, and let
$z^*$ be the center of the circle through $O$, $A$, and $B$.
The three points form an isosceles
triangle with an acute angle at 
the apex $A$. Hence $z^*$ lies in the interior of the triangle, on the 
long diagonal that joints $C$ to $A$. 
By construction, $z^*$ lies on the cut locus of the origin,
and the geodesic parametrized by $\gamma(s)=sz^*$ 
is minimal for $0\le s\le 1$. We claim that
\begin{equation}
\label{eq:claim-isosceles}
\frac{d}{ds}P(\gamma(s))\Big\vert_{s=1}
= z^*\cdot \nabla P(z^*)>0\,.
\end{equation}
It follows that $P$ increases on a small segment of the
geodesic shortly before it reaches~$z^*$, 
which contradicts monotonicity.

To see the claim, parametrize the long diagonal 
as $\beta(s)=(1,0)+ s\xi$, where 
$\xi=(-1\!-\!a,b)$ is the direction vector from $C$ to $A$. Along the diagonal, $$P(\beta(s))=\frac{4}{b}\cos(2\pi s )\,.$$
Since $P$ is symmetric under reflection at the
diagonal, $\nabla P(\beta(s))$ is tangent
to $\beta$.  Since $P(\beta(s))$ increases for $s\in[\frac12,1]$, the gradient is
a positive multiple of $\xi$ on that interval.

Let $\eta=(1-a,b)$ be the direction vector
for the short diagonal that joins $O$ to
$B$. Evidently, 
$A=\frac{1}{2}\xi+\frac{1}{2}\eta$,
and $B=\eta$. Since $z^*$ is in the convex hull of $O$, $A$ and $B$ we can write $z^*$ in the  basis $(\xi,\eta)$ as $z^*=r\xi+t\eta$ for some $r,t\in(0,1)$. Recalling that $z^*$ lies on the segment of the long diagonal
where $\nabla P$ is a positive multiple of $\xi$, we see that
$$z^*\cdot\nabla P(z^*)=(r\xi+t\eta)\cdot \nabla P(z^*)
=r\,|\xi|\,|\nabla P(z^*)| >0\,.
$$
In the last step we have
used that $\xi\cdot\eta=0$ because the fundamental parallelogram forms a rhombus, whose diagonals are orthogonal.
\end{proof}

\subsection{Klein bottles}

Let us consider the one-parameter family of flat Klein bottles given by
\begin{equation}
\label{eq:def-K}
\mathcal{K}= \RR^2\Big\slash \Bigl\{(x_1,x_2)\sim(1+x_1,x_2)\sim(1-x_1,b+x_2)\Bigr\}\,.
\end{equation}
A crucial difference between $\mathcal{K}$ and a flat torus is that
$\mathcal{K}$ is symmetric under translation along the second coordinate 
axis, but not  the first. Consider the orientable double 
cover of $\mathcal{K}$, defined by $\mathcal{T}=\RR^2/\Lambda$, where
$\Lambda$ is the rectangular lattice with basis $\{(1,0),(0, 2b)\}$. 
We view $\mathcal{K}$ as the quotient of $\mathcal{T}$ under the isometry 
$(x_1,x_2)\mapsto (1-x_1,b+x_2)$.

A complete set of orthogonal eigenfunctions for $\mathcal{K}$ is given by
\begin{equation}
    \label{eq:Klein-eigenfunctions}
\begin{cases}
\cos(2\pi \ell_1 x_1) e^{\pi i\ell_2x_2/b}\,,\qquad
& (\ell_1\ge 0,\ \ell_2\ \text{even})\,,\\
\sin(2\pi \ell_1 x_1) e^{\pi i\ell_2x_2/b}\,,\qquad
& (\ell_1>0,\ \ell_2\ \text{odd})\,.
\end{cases}
\end{equation}
Here, $\ell=(\ell_1,\ell_2)$ is a pair of integers, and as in the case of the tori, 
the eigenfunctions should be normalized by a factor of
$(2/b)^{1/2}$.
The principal eigenvalue is $\lambda_1=
\min\{ (2\pi)^2,(2\pi/b)^2\}$, corresponding 
to $\ell=(1,0)$ or $(0,\pm 2)$.
For $b<1$, the principal eigenvalue is simple,
with eigenfunction $\cos(2\pi x_1)$. For $b>1$ the multiplicity is two, with eigenfunctions $\cos(2\pi x_2/b)$ and $\sin (2 \pi x_2/b)$.
At $b=1$ the eigenvalues cross and the multiplicity is three.

\begin{prop}
\label{prop-K}
Let $\mathcal{K}$ be a flat Klein bottle 
given by equation \eqref{eq:def-K} for some $b>0$. 
Then for sufficiently large $t$, the heat kernel on $\mathcal{K}$ is not geodesically decreasing.
\end{prop}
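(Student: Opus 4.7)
The plan is to follow the strategy of Propositions~\ref{prop-generic} and \ref{prop-isosceles}: produce a minimal geodesic from a well-chosen basepoint along which the integral kernel $P_{\lambda_1}(x,y)$ of the spectral projection onto the principal eigenspace fails to decrease. The uniform convergence $e^{\lambda_1 t}(K_t-\phi_0^2)\to P_{\lambda_1}$ recorded in Section~\ref{sec-aux} will then force $K_t$ to fail geodesic monotonicity at all sufficiently large~$t$.

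I would treat the regime $b<1$ first. The spectral description in \eqref{eq:Klein-eigenfunctions} shows that the smallest nontrivial eigenvalue is $(2\pi)^2$, attained uniquely by the eigenfunction $\cos(2\pi x_1)$ (the competitor $(2\pi/b)^2$ coming from $\ell=(0,\pm 2)$ is strictly larger). Since $\lambda_1$ is simple, Theorem~\ref{thm-simple} disposes of this case immediately.

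For $b\ge 1$, the principal eigenspace always contains the pair $\cos(2\pi x_2/b),\sin(2\pi x_2/b)$, which contributes a summand $(2/b)\cos(2\pi(x_2-y_2)/b)$ to $P_{\lambda_1}(x,y)$; the additional eigenfunction $\cos(2\pi x_1)$ that appears only at $b=1$ contributes a term separable in $x_1, y_1$. I would fix a basepoint $x=(x_1,0)$ with $0<x_1<\tfrac14$ and take the lifted ``vertical'' geodesic $\gamma(s)=(x_1,s)$. Along $\gamma$ the $b=1$ extra term is constant in $s$, while the principal summand reduces to $(2/b)\cos(2\pi s/b)$, which is strictly increasing on the interval $(b/2,b)$. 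The argument is then complete as soon as $\gamma$ is shown to descend to a minimal geodesic on some interval $[0,s^*]$ with $s^*>b/2$.

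The remaining, and main, step is a Voronoi-cell check. The orbit of $x$ in $\RR^2$ under the Klein-bottle group consists of the ``straight'' images $(x_1+n,2mb)$ together with the ``twisted'' images $(1-x_1+n,(2m+1)b)$ for $n,m\in\ZZ$. Comparing $\|\gamma(s)-x\|^2=s^2$ with the squared distances from $\gamma(s)$ to each candidate representative, the binding constraint for $0<x_1<\tfrac14$ turns out to come from the twisted neighbor $(-x_1,b)$ and yields minimality precisely up to $s^*=\tfrac{b}{2}+\tfrac{2x_1^2}{b}>\tfrac{b}{2}$ (with $s^*<b$, so that the translational image $(x_1,2b)$ does not interfere, which follows from $x_1\le\tfrac14\le\tfrac{b}{2}$). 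Picking $s_1=b/2$ and any $s_2\in(b/2,s^*)$ gives $P_{\lambda_1}(x,\gamma(s_2))>P_{\lambda_1}(x,\gamma(s_1))$, and hence $K_t(x,\gamma(s_2))>K_t(x,\gamma(s_1))$ for all $t$ large enough. The main obstacle is exactly this geometric step: it is the reflection encoded in the Klein-bottle relation, rather than a translation, that becomes the binding obstruction to minimality at $s=b/2$, producing the gain $2x_1^2/b$ which lets $\cos(2\pi s/b)$ enter its increasing regime before $\gamma$ is cut off.
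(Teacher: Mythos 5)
Your proposal is correct and follows essentially the same route as the paper: dispose of $b<1$ via the simplicity of $\lambda_1$ and Theorem~\ref{thm-simple}, then for $b\ge 1$ run a vertical geodesic from $(\xi,0)$, check that the twisted (orientation-reversing) image only cuts it off strictly beyond the midpoint, and use the limit $e^{\lambda_1 t}(K_t-\phi_0^2)\to P_{\lambda_1}$ to transfer the increase of $(2/b)\cos(2\pi s/b)$ to $K_t$ for large $t$, with the extra $b=1$ term constant along the geodesic. Your explicit computation of the cut value $s^*=\tfrac{b}{2}+\tfrac{2x_1^2}{b}$ (and the restriction $x_1<\tfrac14$) is just a more quantitative version of the paper's midpoint distance comparison in the double cover.
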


\begin{figure}[t]
 \centering
 \includegraphics{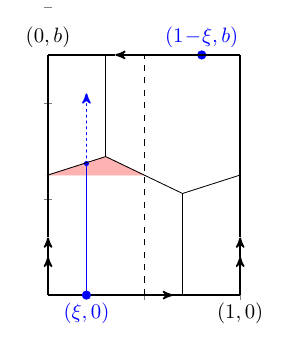}
\caption{
   \small
   Fundamental rectangle for a flat Klein bottle of height $b>1$. The left and right edges are identified to form a cylinder, and the top and bottom circles are glued by the orientation-reversing isometry $\psi(x)=-x$. 
  Given $\xi\in (0,\frac12) $, the thin lines correspond to the cut locus of $(\xi,0)$ in the Klein bottle.
   Geodesic monotonicity fails on the segment of the vertical geodesic emanating from $(\xi,0)$ that
lies inside the shaded triangle.}
    \label{fig-Klein}
\end{figure}

\begin{proof} We distinguish three cases depending on the multiplicity of $\lambda_1$.
If $b<1$, then the principal eigenvalue is simple and monotonicity
fails by Theorem~\ref{thm-simple}.

If $b>1$, the principal eigenvalue has multiplicity two, with normalized eigenfunctions $(2/b)^{1/2}\cos(2\pi x_2/b)$
and $(2/b)^{1/2}\sin(2\pi x_2/b)$. 
The spectral projection is given by
$$
P_{\lambda_1}\bigl(x,y\bigr) 
=\frac{2}{b}\cos(2\pi(x_2-y_2)/b)\,.
$$
As in the proof of Proposition~\ref{prop-generic}, we consider a vertical geodesic
$\gamma(s)=(\xi,sb)$, where $0<\xi<\frac12$, see Figure~\ref{fig-Klein}. Since
the distance from $(\xi,\frac{b}{2})$ 
to $(-\xi,b)$ in $\mathcal{T}$ strictly exceeds $\frac{b}{2}$,
the geodesic is minimal up to 
some $s^*>\frac12$. 
Since $P_{\lambda_1}
\bigl(\gamma(0),\gamma(s)\bigr)
= (2/b)\cos (2\pi s)$ strictly increases for $\frac12\le s\le s^*$,
geodesic monotonicity fails.

If $b=1$, the principal eigenvalue has
multiplicity three, with normalized eigenfunctions
$\phi_1(x)=\sqrt{2} \cos(2\pi x_1)$. 
$\phi_2(x)=\sqrt{2}\cos(2\pi x_2)$, and
$\phi_3(x)=\sqrt{2}\sin(2\pi x_2)$.  The eigenfunctions $\phi_2$ and $\phi_3$ correspond to $(\ell_1, \ell_2)=(0,\pm 2)$, accounting for the extra factor of 2.  
The spectral projection is given by
$$
P_{\lambda_1}(x,y) = 2\cos(2\pi x_1)\cos(2\pi y_1)
+ 2\cos(2\pi (x_2-y_2))\,.
$$
 We consider again the vertical geodesic through $(\xi,0)$, given by
$\gamma(s)=(\xi, s)$.
Since $P_{\lambda_1}\bigl(\gamma(0),\gamma(s)\bigr)
= 2\cos^2 ( 2\pi \xi ) + 2\cos(2\pi s)$ strictly increases
for $\frac12\le s\le s^*$,
geodesic monotonicity fails also in this case.
\end{proof}

\noindent \textbf{Remark.} Proposition~\ref{prop-K} can
be strengthened using
the results of Section~\ref{sec-aux}. Specifically, the conclusion of
Theorem~\ref{thm-constant}
fails for the eigenvalue 
$\lambda=(2\pi)^2$ on $\mathcal{K}$.
Note that this is not the principal eigenvalue unless $b\le 1$.
If $b\ge 1$ is a half-integer,
then $\lambda=(2\pi)^2$ has multiplicity three, 
corresponding to the spectral parameters
$\ell=(1,0)$ and $(0,\pm 2b$).
The corresponding
eigenfunctions are (up to a normalization factor) $\cos(2\pi x_1)$,
$\cos(2\pi x_2)$,
and $\sin(2\pi x_2)$.
Otherwise, $\lambda$ is simple, corresponding
to $\ell=(1, 0)$. 
In any case,
equation~\eqref{eq:constant} does not hold. By Theorem~\ref{thm-constant}, 
the heat kernel on $\mathcal{K}$ is geodesically decreasing at most for a sequence of times 
that is either finite, or converges to zero.

It is an open question whether there exist any time $t>0$ and any $b>0$ 
for which the heat kernel on $\mathcal{K}$ is geodesically decreasing.
The same question can be asked about a torus that is
neither rectangular nor a honeycomb torus.

\begin{proof}[Proof of Theorem~\ref{thm-flat}] By Proposition~\ref{prop-honeycomb}, the heat kernel on the honeycomb torus is geodesically decreasing for all $t>0$, proving the first claim. 

Conversely, suppose the heat kernel is geodesically decreasing
on a flat torus $\RR^2/\Lambda$ for some sequence
of times $t_k\to\infty$. As discussed at the beginning 
of the section, by a suitable dilation, rigid rotation,
and change of basis it suffices to consider the case
where
$\Lambda$ is spanned by $(1,0)$ and $(-a,b)$ for some parameters $a,b$ with
$0\le a\le \frac12$ and $a^2+b^2\ge 1$.
Propositions~\ref{prop-generic}
and~\ref{prop-isosceles} imply that the lattice is either
rectangular (with $a=0$, $b\ge 1$) or the honeycomb lattice
($a=\frac12$, $b=\frac12\sqrt{3}$)
\end{proof}

\section{\bf Necessary conditions}
\label{sec-aux}

In this section, we study conditions on the
eigenvalues and eigenfunctions 
of the Laplace-Beltrami operator $-\Delta_g$
that follow from monotonicity of the heat kernel. 
To ease notation, we will omit the metric $g$ from the notation
for geometric quantities such as the Riemannian metric,
volume, and gradient, as well a the Laplace-Beltrami operator.
We have already excluded the possibility of a simple principal eigenvalue in the introduction (cf. Theorem \ref{thm-simple}).

The following observation will play a crucial role.

\begin{lemma}\label{lem-const}
Let $(M,g)$ be a connected Riemannian manifold,
and let $G:M\times M\rightarrow \RR$ be a symmetric function of class $C^1$.
If $G$ is geodesically decreasing
then $G(x,x)$
is constant on $M$.
\end{lemma}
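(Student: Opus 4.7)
The key observation is that geodesic monotonicity of $G$ forces each partial gradient of $G$ to vanish on the diagonal, after which the symmetry of $G$ and a chain-rule computation finish the argument.

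First I would fix $x\in M$ and an arbitrary tangent vector $v\in T_xM$. Since the exponential map is a local diffeomorphism near $0\in T_xM$, for all sufficiently small $s\ge 0$ the curve $\gamma(s)=\exp_x(sv)$ is a minimal geodesic with $\gamma(0)=x$ and $\dot\gamma(0)=v$. Because $G$ decreases geodesically about $x$, the real-valued function $s\mapsto G(x,\gamma(s))$ is non-increasing on some interval $[0,s_0]$, and since $G$ is $C^1$ its right derivative at $s=0$ is non-positive:
\[
\langle \nabla_2 G(x,x),\,v\rangle \ \le\ 0\,.
\]
Here $\nabla_2$ denotes the gradient in the second variable. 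Applying the same argument to $-v$ (also the initial velocity of a minimal geodesic at $x$) yields the reverse inequality, so $\langle \nabla_2 G(x,x),v\rangle=0$ for every $v\in T_xM$. Hence $\nabla_2 G(x,x)=0$.

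Next I would use symmetry. Because $G(x,y)=G(y,x)$, one has $\nabla_1 G(x,x)=\nabla_2 G(x,x)$ as elements of $T_xM$, so $\nabla_1 G(x,x)=0$ as well. Now define $h(x):=G(x,x)$. The chain rule applied to the diagonal embedding $x\mapsto(x,x)$ gives, for every $v\in T_xM$,
\[
dh_x(v)\ =\ \langle \nabla_1 G(x,x),v\rangle + \langle \nabla_2 G(x,x),v\rangle\ =\ 0\,.
\]
Therefore $\nabla h\equiv 0$ on $M$, and since $M$ is connected, $h$ is constant.

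The argument is elementary and there is no real obstacle; the only thing worth flagging is the need to justify that minimal geodesics emanate from $x$ in every direction (which is automatic from the Riemannian exponential map for short times) so that the one-sided derivative bound can be promoted to a two-sided equality by reversing the direction.
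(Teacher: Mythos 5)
Your argument is correct and is essentially the paper's own proof: both establish that monotonicity forces $\nabla_y G(x,y)\big\vert_{y=x}=0$ (you via one-sided derivatives along $\pm v$, the paper via the first-derivative test at the local maximum $y=x$ inside a ball of injectivity radius), then use symmetry and the chain rule along the diagonal to get $\nabla\bigl(G(x,x)\bigr)=0$ and conclude by connectedness. No gaps worth noting.
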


\noindent \textbf{Remark.} The differentiability hypothesis 
cannot be improved. Indeed, for $\eps\in (0,1)$ the function $G(x,y)=-|x-y|+\epsilon(x+y)$ 
is geodesically decreasing, but
$G(x,x)=2\eps x$ is not constant.

\begin{proof}
Given $x\in M$, let $B$ be an open ball centered at $x$
whose radius does not exceed the injectivity radius
of $M$ at $x$. Since $x$ can be joined to any point $y\in B$ 
by a unique minimal geodesic, monotonicity implies
that $G(x,x)\ge G(x,y)$. Since $y$ was arbitrary,
$G(x,\cdot)$ attains a local maximum at $y=x$.
By the first-derivative test,
$\nabla_y G(x,y)\big\vert_{y=x}=0$.
Since $G$ is symmetric, the chain rule yields
$$
\nabla_x G(x,x)= 2\nabla_yG(x,y)\Big\vert_{y=x}=0\,,
$$
which, by connectedness,
implies that $G(x,x)$ is constant on $M$.
\end{proof}

We apply this to the spectral projection 
associated with the principal eigenvalue,
as given by equation~\eqref{eq:limit}.

\begin{lemma}
\label{lem-principal-m} 
Let $(M,g)$ be a compact connected Riemannian manifold whose principal eigenvalue $\lambda_1$ has multiplicity $m$, and let $\phi_1,\ldots,\phi_m$ be an orthonormal basis of eigenfunctions for $\lambda_1$.
If the heat kernel on $M$ decreases along geodesics for some sequence of
times $t_k\to\infty$, then $\phi_1^2+\cdots + \phi_m^2$ is constant on $M$.
\end{lemma}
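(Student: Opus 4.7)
The plan is to reduce the claim to the previously established Lemma \ref{lem-const} applied to the integral kernel $P_{\lambda_1}$ of the spectral projection onto the first nontrivial eigenspace. By the spectral expansion \eqref{eq:spectral}, for any $x,y\in M$,
$$
e^{\lambda_1 t}\bigl(K_t(x,y)-\phi_0^2\bigr)
= \sum_{j=1}^m \phi_j(x)\phi_j(y) + \sum_{j>m} e^{-(\lambda_j-\lambda_1)t}\phi_j(x)\phi_j(y)\,,
$$
and the tail converges to zero as $t\to\infty$ (uniformly in $x,y$, by the bounds recalled in \eqref{eq:limit}). Hence $e^{\lambda_1 t_k}(K_{t_k}(x,y)-\phi_0^2)\to P_{\lambda_1}(x,y)$ for every $x,y$.

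First I would verify that $P_{\lambda_1}$ satisfies the hypotheses of Lemma \ref{lem-const}. Symmetry is immediate from the definition $P_{\lambda_1}(x,y)=\sum_{j=1}^m\phi_j(x)\phi_j(y)$, and $C^1$ regularity (in fact smoothness) follows since the $\phi_j$ are smooth eigenfunctions on the compact manifold $M$. Next I would verify that $P_{\lambda_1}$ is geodesically decreasing. Fix $x\in M$ and let $\gamma:[0,s_0]\to M$ be any minimal geodesic with $\gamma(0)=x$. For each $t_k$, the hypothesis gives $K_{t_k}(x,\gamma(s))\le K_{t_k}(x,\gamma(s'))$ whenever $s\ge s'$. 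Subtracting $\phi_0^2$, multiplying by $e^{\lambda_1 t_k}>0$, and letting $k\to\infty$, the inequality persists in the limit:
$$
P_{\lambda_1}(x,\gamma(s))\ \le\ P_{\lambda_1}(x,\gamma(s'))\,.
$$
Thus $P_{\lambda_1}(x,\cdot)$ decreases geodesically about $x$ for every $x\in M$.

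By Lemma \ref{lem-const}, the diagonal value $P_{\lambda_1}(x,x)=\sum_{j=1}^m\phi_j(x)^2$ is constant on $M$. To pin down the constant, integrate over $M$ against $d\mathrm{vol}$: by orthonormality of the $\phi_j$,
$$
\int_M \Bigl(\sum_{j=1}^m \phi_j(x)^2\Bigr)\,d\mathrm{vol}(x) = m\,,
$$
so the constant equals $m/\mathrm{Vol}(M)$, which is exactly \eqref{eq:constant}.

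The only subtle point is ensuring monotonicity survives the passage to the limit, but this is automatic for pointwise convergence since the relevant inequality is linear and preserved by positive scaling and limits; no uniform convergence is actually needed for this step. Everything else is bookkeeping on the spectral expansion combined with an application of the already-proved Lemma \ref{lem-const}, so I do not foresee any substantive obstacle.
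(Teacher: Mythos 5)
Your proposal is correct and follows essentially the same route as the paper: pass the monotonicity through the limit \eqref{eq:limit} to conclude that $P_{\lambda_1}$ is geodesically decreasing, then apply Lemma~\ref{lem-const} to its diagonal (the paper records the value $m/\Vol(M)$ of the constant right after the lemma, just as you do by integrating and using orthonormality).
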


\begin{proof} By equation~\eqref{eq:limit}, the monotonicity of the heat kernel implies
monotonicity of $P_{\lambda_1}$. Therefore, the claim follows from Lemma~\ref{lem-const}. 
\end{proof}

Since $\phi_1,\ldots, \phi_m$ are orthonormal,
the value of the constant in Lemma~\ref{lem-principal-m} is given by
\begin{equation}
\label{eq:def-c}
c^2:= \phi_1^2\ +\ \cdots \ + \ \phi_m^2 =\frac{m}{\Vol(M)}\,.
\end{equation}
We conclude from the lemma that
\begin{equation}
\label{eq:def-Phi}
\Phi (x):=c^{-1}\begin{pmatrix} \phi_1(x),\ldots, \phi_m(x)\end{pmatrix}^t
\end{equation}
defines a map from $M$ to the unit sphere whose components are orthogonal eigenfunctions. At least in two dimensions, these conditions are reminiscent of the properties of $\lambda_1$-extremal metrics~\cite{EI}.

Monotonicity also has consequences for the joint level sets of the principal eigenfunctions.

\begin{lemma} [Convexity]
\label{lem-convex} Under the assumptions of Lemma~\ref{lem-principal-m}, 
the set
$$
M_u:= \bigl\{x\in M : \Phi(x)=u\bigr\}
$$
is geodesically convex for each $u\in \SS^{n-1}$. In particular, $M_u$ 
is connected. Furthermore, if $M_u$ is a submanifold then it is totally geodesic.  
\end{lemma}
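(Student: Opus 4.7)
My plan is to exploit the representation $P_{\lambda_1}(x,y) = c^2\,\Phi(x)\cdot\Phi(y)$, which, together with Lemma~\ref{lem-principal-m}, puts $\Phi$ into the unit sphere $\SS^{m-1}\subset\RR^m$. As a preliminary remark, I would observe that geodesic monotonicity of $K_t$ along the sequence $t_k\to\infty$ transfers to $P_{\lambda_1}$ via the uniform limit in~\eqref{eq:limit}: for each fixed $x$ the function $K_{t_k}(x,\cdot)$ is non-increasing along every minimal geodesic emanating from $x$, and the same then holds for the limit $P_{\lambda_1}(x,\cdot)$.

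For geodesic convexity, I would fix $u\in\SS^{m-1}$, take $x,y\in M_u$, and let $\gamma:[0,s_0]\to M$ be a minimal geodesic with $\gamma(0)=x$ and $\gamma(s_0)=y$. The scalar function
$$
s\,\longmapsto\, P_{\lambda_1}(x,\gamma(s)) \;=\; c^2\,u\cdot\Phi(\gamma(s))
$$
is non-increasing by the preceding remark and takes the value $c^2|u|^2=c^2$ at both endpoints. Hence it is constantly $c^2$, so $u\cdot\Phi(\gamma(s))=1$ throughout. Since $|u|=|\Phi(\gamma(s))|=1$, the equality case of Cauchy--Schwarz forces $\Phi(\gamma(s))=u$, and therefore $\gamma\subset M_u$. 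Connectedness is then immediate: $M$ is compact and connected, so any two points of $M_u$ are joined by a minimal geodesic of $M$, which lies in $M_u$ by the convexity just established.

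For the totally geodesic claim, assume $M_u$ is a smooth submanifold, pick $p\in M_u$ and $v\in T_pM_u$, and choose a smooth curve $\alpha:(-\eps,\eps)\to M_u$ with $\alpha(0)=p$ and $\dot\alpha(0)=v$. For each small $t\neq 0$ let $\gamma_t$ be a minimal geodesic of $M$ from $p$ to $\alpha(t)$; by convexity $\gamma_t$ lies entirely in $M_u$. Parametrizing each $\gamma_t$ on $[0,1]$, its initial velocity converges to $v$ as $t\to 0$, and by continuous dependence of geodesics on their initial data the $M$-geodesic $s\mapsto\exp_p(sv)$ is a locally uniform limit of curves in the closed set $M_u=\Phi^{-1}(u)$. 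Hence it too lies in $M_u$ for small~$s$, which is the definition of totally geodesic.

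The main technical point will come in the totally geodesic step, where one must approximate an arbitrary tangent vector of $M_u$ by initial velocities of minimal $M$-geodesics to nearby points of $M_u$ and then pass to the ODE limit. The convexity and connectedness parts, by contrast, are clean consequences of the rigidity built into the equality case of Cauchy--Schwarz.
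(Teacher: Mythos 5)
Your treatment of convexity and connectedness is correct and is essentially the paper's own argument: monotonicity passes from $K_{t_k}$ to $P_{\lambda_1}$ through the limit~\eqref{eq:limit}, the function $s\mapsto u\cdot\Phi(\gamma(s))$ is monotone with equal endpoint values $1$, hence constant, and the equality case of Cauchy--Schwarz (with $|u|=|\Phi|=1$) forces $\Phi\circ\gamma\equiv u$.

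The totally geodesic step, however, has a genuine gap. With $\gamma_t$ the minimal $M$-geodesic from $p$ to $\alpha(t)$ parametrized on $[0,1]$, its initial velocity is $\exp_p^{-1}(\alpha(t))$, which converges to $0$, not to $v$. To make the initial velocities converge to $v$ you must rescale the parameter, but then the portion of the rescaled geodesic $s\mapsto\exp_p\bigl(s\,\exp_p^{-1}(\alpha(t))/t\bigr)$ that convexity places inside $M_u$ is only $0\le s\le t$ (up to $o(t)$), an interval shrinking to a point. So the curves you actually control converge locally uniformly to the constant curve at $p$, and in the limit you recover only $p\in M_u$ and $v\in T_pM_u$ --- no information about $\exp_p(sv)$ for any fixed $s>0$. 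Continuous dependence on initial data cannot bridge this, because geodesic convexity gives you short secant geodesics in $M_u$, not a geodesic segment of definite length in the direction $v$; extending those segments is exactly what being totally geodesic would guarantee, so the argument is circular as it stands. The paper closes this step differently: take an intrinsic geodesic $\gamma$ of $M_u$, cut it into pieces whose diameter is smaller than the injectivity radii of both $M$ and $M_u$; for each piece, the unique ambient minimal geodesic $\eta$ between its endpoints lies in $M_u$ by convexity, and since lengths of curves in $M_u$ are the same in the induced and ambient metrics, $\eta$ is also an intrinsic minimizer, hence coincides with that piece of $\gamma$ by uniqueness below the injectivity radius of $M_u$. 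Therefore every intrinsic geodesic of $M_u$ is an $M$-geodesic, which is the totally geodesic property. Replacing your limiting argument by this comparison (or by any argument showing the second fundamental form vanishes) completes the proof.
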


\begin{proof}
Fix $u\in \SS^{m-1}$ and $x,y\in M_u$.
Let $\gamma(s)$ be a minimal geodesic
in $M$ starting from $\gamma(0)=x$ and ending at $\gamma(1)=y$. The function
$$
f(s):= u\cdot\Phi(\gamma(s))\,,\qquad 0\le s\le 1
$$
has boundary values $f(0)=f(1)=1$
since $\Phi(x)= \Phi(y)=u$ by assumption. By equation ~\eqref{eq:def-Phi}, we have that $f(s) =  c^{-2}P_{\lambda_1}(x,\gamma(s))$ for some constant $c>0$.
Since this is nondecreasing by the monotonicity
property of $P_{\lambda_1}$, it follows that $f(s)\equiv 1$, and hence
$\Phi(\gamma(s))=u$ for all $s\in [0,1]$.

In the case where $S$ is a submanifold, we need to show that any
geodesic $\gamma(s)$ in $M_u$ (with the induced metric)
is a geodesic in $M$. Let $\rho>0$ be smaller than the injectivity
radius of $M$ and of $M_u$. Consider the case where the diameter of $\gamma$ in $M_u$ (and hence in $M$) is less than $\rho$.
Then the endpoints $x=\gamma(0)$ and $y=\gamma(1)$ 
are connected by a unique minimal geodesic $\eta(s)$ in $M$,
with $\eta(0)=x$, $\eta(1)=y$.
By geodesic convexity, $\eta(s)\in M_u$
for $0\le s\le 1$, and hence $\eta=\gamma$. If $\gamma$ has larger
diameter, we cut it into finitely many segments 
of diameter less than $\rho$, each of which is a minimal geodesic in $M$.
\end{proof}

The remainder of this section concerns higher eigenvalues.
The spectral expansion in 
equation~\eqref{eq:spectral} is
again the main tool. Let us briefly consider the
convergence of the series. The eigenfunctions of
the Laplace-Beltrami operator
satisfy bounds of the form 
$$\|\phi_j\|_{L^{\infty}}\leq C(M,g)\lambda^{\alpha}\,,$$ 
with a dimension-dependent exponent $\alpha$.
For $\alpha=\frac{n}{2}$ these can be obtained
directly from Sobolev's inequalities; the
sharp exponent $\alpha=\frac{n-1}{2}$ is
due to H\"ormander~\cite{Ho} (cf. 
Theorem 84 \cite{Canzani} or \cite{Sogge} for generalizations to $L^p$). On the other hand, Weyl's law implies that
the eigenvalues grow as
$$
\lambda_k \sim c(M,g) k^{\frac{2}{n}}\,.
$$ 

For any $\tau>0$, since the series
$\sum_k k^{(n-1)/n}e^{-tk^{2/n}}$
converges uniformly on $[\tau,\infty)$, the spectral expansion converges absolutely, uniformly
for $x,y\in M$ and $t\ge \tau$. As a consequence, 
the heat kernel is analytic in $t$, and has the same
regularity in $x,y$ as the eigenfunctions. 

In equation~\eqref{eq:spectral}, the eigenvalues are repeated 
according to multiplicity, and different choices can be made 
for the basis of eigenfunctions.  For the purpose of
uniqueness, we find it useful
to group the terms as
\begin{equation}
\label{eq:spectral-2}
K_t(x,y)= \sum_{\lambda\ge 0} 
e^{-\lambda t} P_\lambda(x,y) \,,
\end{equation}
where every nontrivial eigenvalue in the series appears exactly once, 
and
\begin{equation}
\label{eq:P(x,y)}
P_\lambda(x,y):=\sum_{j: \lambda_j =\lambda}
\phi_j(x)\phi_j(y) 
\end{equation}
is the integral kernel for the projection onto the eigenspace of $\lambda$. Since the eigenfunctions are orthonormal, we have that $\int_M P_\lambda(x,x)\,=\mathrm{tr}\, P_\lambda=m$.

By the same estimates, the truncated series
$\sum_{\lambda'>\lambda} e^{(\lambda-\lambda' )t}P_{\lambda'}(x,y)$
also converges absolutely, uniformly
for $x,y\in M$ and $t\ge \tau$. Therefore we can represent the spectral projection as the limit
\begin{equation}
\label{eq:P-limit}
P_{\lambda}(x,y)
=\lim_{t\to\infty} 
e^{\lambda t}\left(K_t(x,y)-\sum_{\lambda'<\lambda}e^{-\lambda' t}P_{\lambda'}(x,y)\right) \,.
\end{equation}

\begin{proof}[Proof of Theorem~\ref{thm-constant}] 
Under the hypothesis that the heat 
kernel is geodesically decreasing for an infinite set of times $(t_k)_{k\ge 1}$  we want to show that equation~\eqref{eq:constant} holds for
every eigenvalue $\lambda$ of $-\Delta$, that is, 
$P_\lambda(x,x)=m/\Vol(M)$, where
$m$ is the multiplicity of $\lambda$.

As discussed above, for each 
$t>0$ and $x\in M$ the spectral expansion
\[S(t)= K_t(x,x)= \sum_{\lambda\ge 0 }
e^{-\lambda t} P_\lambda(x,x) 
\]
converges absolutely
to an analytic function of $t$.
Standard arguments
guaranteeing uniqueness may be used. This is
enough to
determine each term in the series
(cf. Theorem 70 \cite{Canzani}). 
Here we will only argue that 
$P_\lambda(x,x)$ is
constant on $M$ for each eigenvalue $\lambda$.

If that is not the case, let $\lambda$ be the smallest eigenvalue for which the spectral projection depends on~$x$.
By hypothesis and  Lemma \ref{lem-const}  the function $S(t_k)$ is known to be independent of $x$ for a non-repeating  sequence of 
positive times $(t_k)$ that does converge to zero. If
the sequence is unbounded, then passing to a subsequence (again denoted by $(t_k)$) we may assume that
$\lim t_k=\infty$.
By equation~\eqref{eq:P-limit},
$$
P_{\lambda}(x,x)= \lim_{k\to\to\infty}
e^{\lambda t_k}\left(
K_{t_k}(x,x)-\sum_{\lambda'<\lambda}e^{-\lambda't_k}P_{\lambda'}(x,x)
\right)\,.$$
But by Lemma \ref{lem-const} 
the right hand side does not depend on $x$, a contradiction. We conclude that 
$P_\lambda(x,x)$ is constant on
$M$ for every $\lambda\ge 0$. The value of the constant is determined by integrating over $M$.

If, on the other hand, $(t_k)$ is bounded, we may assume (by passing to a subsequence)
that $\lim t_k=\tau$ for some $\tau>0$. Since $S(t)$ is analytic, 
it is uniquely determined by the values
$S(t_k)$. Namely, its Taylor coefficients about
about $t=\tau$
are determined recursively by
$$
a_{j}\ =\ \lim_{k\to\infty}\ 
\frac{(j)!}{(t_k-\tau)^j}\ 
\left(
S(t_k)-\sum_{0\le i<j} a_i \frac{(t_k-\tau)^i}{i!} \right)
$$
for $j\ge 0$.
Since $S(t_k)$ does not depend on $x$, the same is true for the coefficients $a_j$.
Therefore 
$S(t)=\sum_j a_k(t-\tau)^j$ is 
is constant in $x$ for $t$ in some neighborhood of
$\tau$. By unique continuation,
$S(t)$ is independent of $x$ {\em for all $t>0$. This has strengthened our hypothesis significantly. As a consequence we} are now in a position
to apply equation~\eqref{eq:P-limit},
and conclude as in the first case
that $P_\lambda(x,x)$ is constant on
$M$ for every $\lambda\ge0$.
\end{proof}

For later use, we note that equation~\eqref{eq:constant} implies a similar relation 
for the gradients of the eigenfunctions. 

\begin{lemma} \label{lem-gradient-const}
Let $\phi_1,\dots,\phi_m$ be eigenfunctions
of $-\Delta$  for the same eigenvalue $\lambda$.
If  $\sum_j \phi_j^2$ is constant on $M$, then
$\sum_{j}|\nabla \phi_j|^2$ is constant. 
\end{lemma}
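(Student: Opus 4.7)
The plan is to compute $\Delta\bigl(\sum_j \phi_j^2\bigr)$ pointwise in two different ways, using the hypothesis that it is constant together with the eigenvalue equation, and to read off the conclusion by comparing the two expressions.

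First, I would recall the Leibniz-type identity valid for any smooth function $f$ on a Riemannian manifold,
$$
\Delta(f^2) \;=\; 2f\,\Delta f \,+\, 2|\nabla f|^2,
$$
which follows from writing $\Delta = \mathrm{div}\,\nabla$ and applying the product rule twice. Applying this with $f=\phi_j$, using $\Delta \phi_j = -\lambda \phi_j$, and summing over $j$ yields
$$
\Delta\Bigl(\sum_{j=1}^m \phi_j^2\Bigr) \;=\; -\,2\lambda \sum_{j=1}^m \phi_j^2 \,+\, 2\sum_{j=1}^m |\nabla \phi_j|^2.
$$

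Now the hypothesis enters: since $\sum_j \phi_j^2$ is constant on $M$, the left-hand side vanishes identically, and the first term on the right is also a constant. Solving for the remaining term gives
$$
\sum_{j=1}^m |\nabla \phi_j|^2 \;=\; \lambda \sum_{j=1}^m \phi_j^2,
$$
which is constant on $M$. In the situation of Lemma~\ref{lem-principal-m} the value of this constant equals $\lambda c^2 = \lambda m/\mathrm{vol}(M)$.

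There is essentially no obstacle: the argument is a one-line application of the Bochner-style identity for $\Delta(f^2)$ combined with the eigenvalue equation, and it does not require the $\phi_j$ to be orthonormal or even to have been chosen as an orthonormal basis of the eigenspace. The only point worth mentioning is that the identity is purely local, so no global or topological hypothesis on $M$ is used beyond smoothness of the metric (which ensures the eigenfunctions and their gradients make classical sense).
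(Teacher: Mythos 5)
Your argument is correct and is essentially the paper's own proof: the identity $\Delta(\phi_j^2)=2\phi_j\Delta\phi_j+2|\nabla\phi_j|^2$ you use is exactly the paper's $|\nabla\phi_j|^2=\mathrm{div}(\phi_j\nabla\phi_j)-\phi_j\Delta\phi_j=\bigl(\tfrac12\Delta+\lambda\bigr)\phi_j^2$, and summing over $j$ and using constancy of $\sum_j\phi_j^2$ gives the same conclusion $\sum_j|\nabla\phi_j|^2=\lambda c^2$. Your remark that orthonormality is not needed (it only fixes the value of the constant) is also consistent with the paper's statement.
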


\begin{proof}
For $j=1,\ldots, m$, we have by the chain rule
$$
|\nabla\phi_j|^2 = \mathrm{div}\, (\phi_j\nabla\phi_j) -
\phi_j (\Delta \phi_j) = \left(\tfrac12 \Delta + \lambda\right)\phi_j^2\,,
$$
where we have used that $\phi_j$ is an eigenfunction
for $\lambda$. If $\sum_j\phi_j^2=c^2$ is
constant on $M$, it follows that
$$
\sum_{j=1}^m |\nabla \phi_j|^2 = \left(\tfrac12 \Delta + \lambda\right)
\sum_{j=1}^m \phi_j^2= \lambda c^2\,,
$$
which is constant on $M$, as claimed.
\end{proof}

\section{\bf The double eigenvalue case} 
\label{sec-double}

Let $(M,g)$ be a compact connected manifold, and $\lambda>0$ an eigenvalue of the Laplace-Beltrami operator $-\Delta_g$. In this section, we consider the case where $\lambda$ has multiplicity two. As in the previous section,
we omit the metric $g$ from the notation whenever there is no
danger of confusion.

Let $\phi_1,\phi_2$ an orthonormal basis for the eigenspace of $\lambda$. By Lemma~\ref{lem-principal-m}, $\phi_1^2+\phi_2^2=c^2$,
where $c$ is the constant from equation~\eqref{eq:def-c} with $m=2$.
For $x\in M$, we write in polar coordinates
\begin{equation}
\label{eq:polar}
\phi_1(x)+i\phi_2(x)= ce^{i\alpha (x)}\,.
\end{equation}
Although the argument
$\alpha(x)$ is determined only up to an integer multiple, we will see that its gradient defines a smooth 
vector field on $M$.
The flow of this vector field will be used to construct
a coordinate system for $M$.
The level sets
\begin{equation}
\label{eq:level-theta}
M_\theta:=\left\{x\in M\ \big\vert\ \phi_1(x)+i\phi_2(x)=ce^{i\theta} \right\}
\end{equation}
will play a key role in the construction of the coordinate system. Note that 
the nodal sets of $\phi_1,\phi_2$
can be written as disjoint unions
$$
\{\phi_1=0\}=M_{\pi/2} \cup M_{-\pi/2}\,,
\qquad 
\{\phi_2=0\}= M_0 \cup M_{-\pi}\,.
$$

We digress now and record a couple of observations that will
ease our later calculations. 
Let $U\subset M$ be an open connected
subset where $\alpha$ is represented by a
smooth function. By the chain rule
$$
\nabla\phi_1+i\nabla \phi_2=ice^{i\alpha}\nabla\alpha\,,
$$
that is, $\nabla\alpha= c^{-2}(\phi_1\nabla\phi_2-\phi_2\nabla\phi_1)$
and $|\nabla\alpha|^2 = c^{-2}(|\nabla\phi_1|^2+|\nabla\phi_2|^2)$
on~$U$.  By Lemma~\ref{lem-gradient-const}, $|\nabla\alpha|=\lambda^{\frac12}>0$.
It follows that  
\begin{equation}
\label{eq:def-V}
V:= \frac{\nabla\alpha}{|\nabla\alpha|}=c^{-2}\lambda^{-\frac12}(\phi_1\nabla\phi_2-\phi_2\nabla\phi_1)\,,
\end{equation}
extends to a global vector field on $M$ that provides a field of unit normals on each of the hypersurfaces $M_\theta$. 
By 
the Implicit Function Theorem,
these hypersurfaces are submanifolds of codimension one; since 
$M$ is connected they are all diffeomorphic to $M_0$. 

Note that $\alpha$ is harmonic and $V$ is divergence-free. 
Indeed, by the product rule
$$
\mathrm{div}\, (\phi_1\nabla\phi_2-\phi_2\nabla\phi_1)
= \phi_1\Delta\phi_2 - \phi_2 \Delta\phi_1 = 0
\,,
$$
where the eigenvalue equation was used in the last step.

Before we proceed any further, let us share a few observations. We will work on coordinate charts on $M$ where the equation makes sense, the vector field is smooth (as a consequence of the eigenfunctions being smooth), and the argument $\alpha(x)$ is well-defined. This allows the use of Picard-Lindel\"of Theorem to show existence and uniqueness for the initial-value problem
\begin{equation}
\label{eq:IVP}
\frac{d}{ds}\eta(s)=V(\eta(s))\,,\qquad \eta(0)=x\,.
\end{equation} 
The flow $(\Psi_s)_{s\in\RR}$ generated by $V$ is defined by the property that for $x\in M$, the
function $\eta(s)=\Psi_s(x)$ 
solves equation \eqref{eq:IVP}.
 The Picard-Lindel\"of theorem guarantees that $\Psi$ depends smoothly on both variables. We will freely use that the flow defines a group 
of diffeomorphisms on $M$, satisfying $\Psi_s\circ\Psi_t=\Psi_{s+t}$
 for all $s,t\in\RR$. 

The following lemma shows that the 
integral curves of $V$ are unit speed geodesics

 \begin{lemma} 
 \label{lem-flow}
Let $\alpha$ and $V$ be as in equations~\eqref{eq:polar}
and~\eqref{eq:def-V}. 
Then the solutions of equation \eqref{eq:IVP} are 
geodesics parametrized by arc length.
 \end{lemma}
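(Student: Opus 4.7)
The plan is to show two things: that the integral curves have constant speed one (which is immediate from the definition of $V$), and that they satisfy the geodesic equation $\nabla_V V = 0$. The second point is where the real content lies, and it will follow by combining the formula $V = \lambda^{-1/2}\nabla\alpha$ (valid wherever $\alpha$ is smooth) with the fact from Lemma~\ref{lem-gradient-const} that $|\nabla\alpha|^2 = \lambda$ is constant on $M$.

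The argument is local, so I would first fix a point $x\in M$ and a small open neighborhood $U$ on which a smooth branch of $\alpha$ is well-defined, as guaranteed by equation~\eqref{eq:polar} and the fact that $\phi_1 + i\phi_2$ never vanishes (since $\phi_1^2 + \phi_2^2 = c^2 > 0$). On $U$, the vector field $V$ agrees with $\lambda^{-1/2}\nabla\alpha$. Any integral curve $\eta(s)$ of $V$ has speed $|\eta'(s)| = |V(\eta(s))| = 1$, so it is parametrized by arc length. Note that although $\alpha$ is only defined up to a constant, its gradient is globally defined, so these local computations patch together consistently.

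It remains to check that $\nabla_V V = 0$ along any such curve. Using the identity
\begin{equation*}
\nabla_{\nabla\alpha}\nabla\alpha \;=\; \tfrac12 \nabla\bigl(|\nabla\alpha|^2\bigr)\,,
\end{equation*}
which holds for the gradient of any smooth function (being a consequence of the symmetry of the Hessian, or equivalently of the fact that $d(d\alpha)=0$), together with $|\nabla\alpha|^2 \equiv \lambda$ constant, I obtain
\begin{equation*}
\nabla_V V \;=\; \lambda^{-1} \nabla_{\nabla\alpha}\nabla\alpha \;=\; \tfrac{1}{2}\lambda^{-1}\nabla(\lambda) \;=\; 0\,.
\end{equation*}
Hence $\eta$ solves the geodesic equation, completing the proof.

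There is no real obstacle here: once Lemma~\ref{lem-gradient-const} guarantees that $|\nabla\alpha|$ is a nonzero constant, the geodesic property is essentially the standard fact that the gradient flow of a function whose gradient has constant length is a geodesic flow. The only subtlety to keep in mind is the multi-valuedness of $\alpha$, which is harmless because the flow of $V$ is defined using only $\nabla\alpha$, a globally smooth object on $M$.
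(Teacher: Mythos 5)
Your proof is correct, but it takes a genuinely different route from the paper. You verify the geodesic equation directly: writing $V=\lambda^{-1/2}\nabla\alpha$ on a chart where a smooth branch of $\alpha$ exists, you invoke the Hessian-symmetry identity $\nabla_{\nabla\alpha}\nabla\alpha=\tfrac12\nabla\bigl(|\nabla\alpha|^2\bigr)$ together with the constancy of $|\nabla\alpha|^2=\lambda$ (from Lemma~\ref{lem-gradient-const}) to get $\nabla_V V=0$; this is the classical fact that a function whose gradient has constant nonzero length has geodesic gradient flow lines, and your handling of the multivaluedness of $\alpha$ is fine since $\nabla\alpha$, and hence $V$ via equation~\eqref{eq:def-V}, is globally smooth. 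The paper instead avoids the Levi-Civita connection altogether: it notes that along a solution of \eqref{eq:IVP} one has $\tfrac{d}{ds}\alpha(\eta(s))=|\nabla\alpha|=\lambda^{1/2}$, compares this after integration with the Mean Value Inequality $|\alpha(\eta(s))-\alpha(x)|\le\lambda^{1/2}\,d(\eta(s),x)\le\lambda^{1/2}s$, and concludes that equality forces $d(\eta(s),x)=s$, so the unit-speed curve realizes the distance and is therefore a \emph{minimal} geodesic as long as it stays in the chart. Your argument is the more standard structural one and immediately gives the statement of the lemma; the paper's purely metric argument uses only first-order information and yields local minimality of the integral curves as a small bonus. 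Either proof is acceptable here.
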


 \begin{proof} Let $U\subset M$ be a connected domain where $\alpha$ is well-defined.
It suffices to consider, at any given $x\in U$, solutions $\eta(s)$ with $\eta(0)=x$, for $s\ge 0$ sufficiently small. Since $V$ is a unit vector field, the solutions are parametrized by arc length. The Mean Value Inequality implies
$$
|\alpha(\eta(s))-\alpha(x)|
\ \le \ \bigl(\sup |\nabla \alpha|\bigr)\,
|\eta(s)-x|\ \le \ \lambda^{1/2}s\,,
$$
where the last step uses that $|\nabla\alpha|=\lambda^{1/2}$ is constant, $V$ is a unit vector field, and $\eta$ is parametrized by arc length.  On the other hand,  since $V$ 
is the unit vector field in the direction of $\nabla\alpha$, we have
$$\frac{d}{ds} \alpha(\eta(s))=
\bigl\langle V(\eta(s)),\nabla \alpha(\eta(s))\bigr\rangle
 = |\nabla\alpha|=\lambda^{1/2}\,.
$$
After integrating over $s$, we see that
equality holds in the preceding inequality, and in particular
$|\eta(s)-x| =s$ so long as $\eta(s)$ remains in $U$. This proves that $\eta$ is a minimal geodesic.
\end{proof}

We introduce the following notation.
Given a diffeomorphism $\psi$
on a manifold $N$, and $b>0$, we refer to
\begin{equation}
\label{eq:diffeo}
M_{\psi,b}:= N\times \RR \big\slash
\bigl\{
(x,h)\sim \bigl(\psi(x),h+b\bigr),\ 
x\in N, h\in\RR\bigr\}\,.
\end{equation}
as a {\em mapping torus}.
We are ready to construct coordinates on $M$.

\begin{lemma} 
\label{lem-diffeo}
Let $\lambda$ be a double eigenvalue of $-\Delta_g$
on a compact connected manifold 
$(M,g)$, and let
$\phi_1,\phi_2$ be a pair of orthonormal eigenfunctions for~$\lambda$.

If $\phi_1^2+\phi_2^2=c^2$ is constant on $M$, then, up
to isometry,  $M=M_{\psi, b}$,
where $\psi$ is diffeomorphism 
of a submanifold $N$ of codimension one, $b=2\pi\lambda^{-1/2}$, and the metric $g$ has the form
\begin{equation}\label{eq:gh}
    \langle v,v\rangle_{g}=
\langle \dot x,\dot x\rangle_{g_h}+ \dot h^2
\end{equation}
for any tangent vector $v=(\dot x,\dot h)$ at $(x,h)\in M_{\psi,b}$. 
\end{lemma}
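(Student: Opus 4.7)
The plan is to use the geodesic flow $(\Psi_h)_{h\in\RR}$ of the unit vector field $V$ from equation~\eqref{eq:def-V} to exhibit $M$ as a mapping torus over $N:=M_0$ with $b:=2\pi\lambda^{-1/2}$. Recall that $N$ is a smooth connected codimension-one submanifold of $M$ (by the Implicit Function Theorem applied to $\alpha$, combined with Lemma~\ref{lem-convex}), and that along any flow line $\alpha(\Psi_h(x))=\alpha(x)+\sqrt\lambda\,h$. In particular, $\Psi_b$ preserves every level set of $\alpha$ modulo $2\pi$; since $\Psi_b$ is a global diffeomorphism of $M$, its restriction $\psi:=\Psi_b|_N$ is a diffeomorphism of $N$ onto itself.

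I would then show that
\[
F:N\times\RR\to M,\qquad F(x,h):=\Psi_h(x),
\]
descends to a diffeomorphism $\bar F:M_{\psi,b}\to M$. The identity $F(x,h+b)=\Psi_h(\Psi_b(x))=F(\psi(x),h)$ ensures that the induced map on the quotient is well-defined. For surjectivity, given $y\in M$ I would choose $\theta\in[0,2\pi)$ with $\alpha(y)\equiv\theta\pmod{2\pi}$ and set $h:=\theta/\sqrt\lambda$; then $\Psi_{-h}(y)\in N$ and $\bar F([\Psi_{-h}(y),h])=y$. For injectivity, if $\Psi_h(x)=\Psi_{h'}(x')$ with $x,x'\in N$, matching arguments forces $h'-h=kb$ for some $k\in\ZZ$, and applying $\Psi_{-h}$ then gives $x=\psi^k(x')$, which is precisely the mapping-torus equivalence relation. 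Since $V$ is nowhere zero and transverse to the level sets of $\alpha$, $dF$ has full rank everywhere, so $\bar F$ is a smooth bijection with smooth inverse.

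For the metric I would pull $g$ back via $F$. In coordinates $(x,h)$ on $N\times\RR$, we have $dF(\partial_h)=V$ with $|V|=1$, while for $w\in T_xN$ the image $dF(w)=d\Psi_h(w)$ lies in $T_{\Psi_h(x)}M_{\sqrt\lambda h}$, because $\Psi_h$ maps $N$ diffeomorphically onto $M_{\sqrt\lambda h}$. Since $V=\nabla\alpha/|\nabla\alpha|$ is normal to every level set of $\alpha$, the subspaces $\RR\partial_h$ and $T_xN$ are $F^*g$-orthogonal at each point. Hence
\[
F^*g=g_h+dh^2,\qquad g_h:=\Psi_h^*\bigl(g\big|_{M_{\sqrt\lambda h}}\bigr).
\]
The relations $\Psi_{h+b}=\Psi_h\circ\psi$ and $M_{\sqrt\lambda(h+b)}=M_{\sqrt\lambda h}$ yield $g_{h+b}=\psi^*g_h$, so $F^*g$ descends to a well-defined metric on $M_{\psi,b}$ of the asserted split form, and $\bar F$ is the required isometry.

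The main obstacle is the second step: verifying that $\bar F$ is a global diffeomorphism. The key ingredients are the linear growth of $\alpha$ along orbits (which pins down the return time to $N$ as exactly $b$), the compactness of $M$ (which ensures the flow is complete), and the connectedness of $N$. Once this global coordinate system is established, the metric splitting is a direct consequence of the unit length of $V$ and its orthogonality to the level sets of $\alpha$, both of which were already recorded in equation~\eqref{eq:def-V} and Lemma~\ref{lem-flow}, and no further Jacobi-field computation is needed.
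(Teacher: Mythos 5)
Your argument is essentially the paper's: you use the flow of $V$, set $F(x,h)=\Psi_h(x)$, check that it descends to a diffeomorphism of the mapping torus, and obtain the split form of the metric from the fact that $V$ is a unit normal field to the level sets $M_\theta$; this is correct. Two small points. First, your appeal to Lemma~\ref{lem-convex} to assert that $N=M_0$ is connected is not available here: that lemma concerns the principal eigenvalue under the heat-kernel monotonicity hypothesis, which is not assumed in the present lemma, and indeed $M_0$ may be disconnected (this is precisely the situation treated in Proposition~\ref{prop-double}); fortunately none of your steps (surjectivity, injectivity, full rank of $dF$, the orthogonal splitting) uses connectedness, so the claim should simply be dropped. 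Second, with your choice $\psi=\Psi_b\vert_N$ the identity you verify, $F(x,h+b)=F(\psi(x),h)$, matches the relation $(x,h)\sim(\psi(x),h+b)$ of equation~\eqref{eq:diffeo} only after replacing $\psi$ by $\psi^{-1}$; the paper takes $\psi=\Psi_{-b}\vert_{M_0}$ for exactly this reason, but since $M_{\psi,b}$ and $M_{\psi^{-1},b}$ are isometric this is merely a convention mismatch.
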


\noindent{\bf Remark.}
The
metrics $(g_h)_{h\in\RR}$ on $N$
are compatible with the equivalence relation in equation~\eqref{eq:diffeo};
in particular $\psi$ is an isometry
from $(N,g_h)$ to $(N,g_{h+b})$
for each $h\in\RR$. Moreover  the 
volume element $\sqrt{\operatorname{det} g_h}$ does not depend on $h$.

\begin{proof}
Let the argument function, the level sets $M_\theta$,
and the unit vector field $V$ be as in
equations~\eqref{eq:polar}-\eqref{eq:def-V}, and
let $(\Psi_s)_{s\in\RR}$ be the flow of $V$. 
Set $N:=M_0$ and $\psi:=\Psi_{-b}\big\vert_{M_0}$.
Choose the argument function $\alpha$ to take  
values in $[0,2\pi)$; it is smooth except for a jump 
of $2\pi$ across $M_0$. 
Since $\alpha(\Psi_s(x))=\lambda^{1/2}s$ modulo $2\pi$, 
we recognize $b$ as the time of first return
and $\psi$ as the (inverse) Poincar\'e map for $M_0$.
Thus $\psi$ defines a diffeomorphism of $N$.

Define $F:N\times\RR\to M$ by
\begin{equation}
\label{eq:def-F}
F(x,h) := \Psi_h(x)\,.
\end{equation}
We claim that $F$ can be factored as the composition of the 
quotient map from $N\times\RR$ to $M_{\psi,b}$ with a 
diffeomorphism $\bar F: M_{\psi,b}\to M$.

$\bar F$ is well-defined, since
$$F(\psi(x),h+b)= \Psi_{h+b}(\psi(x))
= \Psi_{h+b}\circ \Psi_{-b}(x)= \Psi_h(x) = F(x)
$$
for all $x\in M_0$ and $h\in\RR$.
It is injective, since for $x,x'\in M_0$ and $h,h'\in [0, b)$ we have $F(x,h)=F(x',h')$ if and only if $h=h'$ and $\Psi_h(x)=\Psi_h(x')$, which forces $x=x'$ because $\Psi_h$ is invertible. It is also surjective, since for 
any point $x\in M$ taking $h:=\lambda^{-1/2}\alpha(x)$ and 
$y:=\Psi_{-h}(x)\in M_0$ yields $F(y,h)
=\Psi_h\circ \Psi_{-h}(x)=x$. 
By construction, both $\bar F$ and its inverse are smooth.
This proves the first claim.

We pull back
the metric $g$ from $M$ to $M_{\psi,b}$
through the diffeomorphism~$\bar F$. In the same way, we define a metric on the infinite 
cylinder $N\times \RR$ by pulling back $g$ through~$F$.
In a convenient abuse of of notation, we denote the pulled-back 
metrics again by~$g$. In this notation, $F$ is the 
quotient map from $(N\times \RR,g)$ 
to $(M_{\psi,b},g)$. The argument function is 
$\alpha(x,h)=\lambda^{1/2}h$, and
the flow acts by vertical translation, $\Psi_s(x,h)=(x,h+s)$.

Since $V$ is a unit vector field, orthogonal
to the horizontal hypersurfaces $N\times\{h\}= M_{\lambda^{-1/2}h}$,
the metric satisfies equation~\eqref{eq:gh}.
Furthermore, since $V$ is divergence-free, the flow preserves the
volume on $M$, that is, $\sqrt{\operatorname{det} g}(x,h)~=~\sqrt{\operatorname{det} g_h}(x)$ does not 
depend on $h$.
\end{proof}

The lemma implies in particular that
$\ell^2\lambda$ is an eigenvalue of $-\Delta_g$
for each $\ell\ge 1$, with eigenfunctions
$c\, \cos(\ell \lambda^{1/2}h)$
and $c\, \sin(\ell \lambda^{1/2}h)$.
There are also implications for topological invariants, e.g., the 
fundamental group of $M$ contains $\ZZ$ as a factor, and 
its Euler characteristic vanishes.

In the special case of the principal eigenvalue
$\lambda=\lambda_1$, much more can be said:
the mapping torus becomes a product space
(with the product metric),
and $N$ is connected as well as totally geodesic, see Theorem~\ref{thm-product}. 
The crucial ingredient is Lemma~\ref{lem-convex}, which implies 
that the horizontal hypersurfaces
$\{h=h_0\}$ are totally geodesic in $M_{\psi,b}$.

\begin{proof}[Proof of Theorem~\ref{thm-product}]
Assume that the heat kernel on $(M,g)$ is geodesically
decreasing, and the principal eigenvalue $\lambda_1$ of $-\Delta_g$
has multiplicity two. By Lemma~\ref{lem-principal-m},
the eigenfunctions satisfy $\phi_1^2+\phi_2^2=c^2$
for some constant $c>0$.
By Lemma~\ref{lem-diffeo},
the manifold is given by $(M_{\psi,b},g)$, 
as described in equations~\eqref{eq:diffeo}
and~\eqref{eq:gh}. 
We claim that the metrics $g_h$ are locally constant in $h$. 

Given $x_0\in N$ and $h_0\in \RR$, choose coordinates 
$\xi=(\xi_1,\dots,\xi_{n-1})$ for $N$ in a connected 
neighborhood $U$ of $x_0$, and let $I$ be a short open 
interval containing $h_0$. In these coordinates 
$\alpha(\xi,h)=2\pi h/b$
and $V(\xi,h)\equiv(0,1)$.
By Lemma~\ref{lem-convex}, the horizontal hypersurfaces $\{h=h_0\}$ are totally geodesic.
Since a geodesic is uniquely determined 
by its initial position and velocity, any 
geodesic that 
is tangent to the hypersurface $\{h=h_0\}$
initially remains in that hyperplane forever.

We will make use of one component of the variational
equation for the energy functional associated with the length.
The energy of a curve in $U\times I$ parametrized by
$\gamma(s) =(\xi(s),h(s))$
is given by
\[  E(\gamma) 
= \int \langle \dot \xi,\dot \xi\rangle_
{g_{h}(\xi)} + \dot h^2\, ds\,.
\]
For clarity, we have 
suppressed the variable $s$ in the notation.
Minimizers for $E$ among curves with fixed endpoints
are length-minimizing geodesics, and the minimum value
is the squared  distance between the endpoints. Variations in the last coordinate about a minimizing curve, given by $h_{\epsilon} = h +\epsilon \eta$, yield the variational equation
 $$ \int  \eta\, \partial_h 
 \langle \dot\xi,\dot\xi\rangle_{
 g_{h}(\xi)}
 + 2\dot\eta \dot h\,ds \ =\ 0\,,$$
which holds for all smooth (scalar-valued) 
test functions $\eta(s)$ of compact support.

For any geodesic that remains in a hyperplane
$\{h=h_0\}$, the last term of the integrand
vanishes. Since $\eta$ is arbitrary, it follows that 
$\partial_h\langle \dot x, \dot x\rangle_{g_h(x)}$ vanishes identically along the
geodesic.
Since such a geodesic can emanate in 
any horizontal direction $(\dot \xi_0,0)$ 
from any point $(\xi_0,h_0)\in U\times I$,  we conclude by connectedness that the metric $g_h$ does not 
depend on $h$. Thus $g=g_0\otimes 1$ on $N\times \RR$.  By Pythagoras' identity, the distance function on
$M_{\psi,b}$ satisfies
\begin{align*}
\mathrm{dist}_{M_{\psi,b}}\,\bigr((x,h),
(x',h'))^2 &=
\min \Bigl\{
\mathrm{dist}_{N}(x,x')^2+|h-h'|^2,\\
&\qquad \qquad
\mathrm{dist}_{N}(\psi(x),x')^2+|h+b-h'|^2
\Bigr\}
\end{align*}
for $x,x'\in N$ and $0\le h\le h'\le b$.

\smallskip 
Since the metric is compatible with the
equivalence relation in equation~\eqref{eq:diffeo},
the Poincar\'e map $\psi$ is an isometry
of $(N,g_0)$.
It remains to prove that $\psi $ is the identity.
We proceed by contradiction.

Suppose that 
$\psi(x)\ne x$ for some $x\in N$. As in the proof of Proposition~\ref{prop-generic}, we will show that the 
projection of the heat kernel
onto the principal 
eigenspace violates monotonicity along 
the vertical geodesic parametrized by $\beta(s)=(x,sb)$.
The geodesic meets the cut locus of $(x,0)$ at the first 
time $s>0$ where $\beta(s)$ is equidistant to $(x,0)$ and $(\psi(x),b)$.
Since 
$$\mathrm{dist}_{M_{\psi,b}}\,\bigl((x,0),\beta(s)\bigr) \ \le 
\ \frac{b}{2} \ < \ \mathrm{dist}_{M_{\psi,b}}\,\bigl((\psi(x),b),\beta(s)\bigr)\,,
$$
for any $s\in (0,\frac12)$, it follows that the geodesic is minimal up to some $s^*>\frac12$.

Recall that $\phi_1(x,h)=c\cos (2\pi h/b)$,
$\phi_2(x,h)=c\sin (2\pi h/b)$ form an orthonormal basis for the principal eigenspace, where $c$ is given by equation~\eqref{eq:def-c}. 
 The integral kernel of the spectral projection is given by $$P_{\lambda_1}\bigl((\xi,h),(\xi',h')\bigr)
= c^2 \cos(2\pi (h-h')/b)\,.$$
Clearly,
$
P_{\lambda_1}\bigl((\xi, 0),\beta(s)\bigr)=
c^2 \cos(2\pi s)$
increases along the geodesic for $\frac12\le s\le s^*$, 
contradicting  monotonicity.
\end{proof}

We return to the general case of a double
eigenvalue $\lambda>\lambda_1$. Though Lemma~\ref{lem-diffeo} applies, we
have less information about the metric
than for $\lambda=\lambda_1$, and
do not know if $M$ is necessarily
a product
(as in the conclusion of 
Theorem~\ref{thm-product}).
The following result shows that we may
take the base
space $N$ in equation~\eqref{eq:diffeo}
to be connected, at the expense
of enlarging $b$.

\begin{prop} \label{prop-double}
Let $(M,g)$ be a compact connected Riemannian
manifold whose heat kernel is geodesically
decreasing for a non-repeating sequence of positive times
$(t_k)_{k\ge 1}$ that does not converge to zero
(as in the hypothesis of Theorem~\ref{thm-product}). If $\lambda$ is an eigenvalue of $-\Delta_g$ of multiplicity two, then the nodal set of
any eigenfunction  $\phi$ associated with $\lambda$ has an even number of components.

If the number of components is $2k$, let 
$N'$ be one of them, 
and set $b'=2\pi k\lambda^{-\frac12}$.
There exists a diffeomorphism $\psi'$ of $N'$ such that,
up to isometry, $M$ is a mapping torus
$N'_{\psi',b'}$ as in
equation~\eqref{eq:diffeo}, 
and $g$ satisfies equation~\eqref{eq:gh}.
\end{prop}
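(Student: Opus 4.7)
The plan is to refine the structure from Lemma~\ref{lem-diffeo} by descending to a single connected component of the hypersurface $N=M_0$, using connectedness of $M$ to show that the Poincar\'e map $\psi$ acts as a single $k$-cycle on the components of $M_0$. Throughout, I use the argument function $\alpha$, the unit vector field $V=\nabla\alpha/|\nabla\alpha|$, and the flow $(\Psi_s)$ produced by Lemma~\ref{lem-diffeo} from the identity $\phi_1^2+\phi_2^2=c^2$ that Theorem~\ref{thm-constant} guarantees under the present hypothesis.

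The count of nodal components is immediate from this setup. Since $\Psi_s$ restricts to a diffeomorphism $M_\theta\to M_{\theta+s\sqrt{\lambda}}$ (mod $2\pi$), every level set $M_\theta$ has the same number $k$ of connected components as $M_0$. An arbitrary eigenfunction associated with $\lambda$ has the form $\phi=\cos\theta\,\phi_1+\sin\theta\,\phi_2=c\cos(\alpha-\theta)$, so its nodal set is $M_{\theta+\pi/2}\cup M_{\theta-\pi/2}$, a disjoint union of $2k$ components.

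For the mapping-torus structure, label the components of $M_0$ as $N_1,\dots,N_k$ and let $\sigma$ denote the permutation they undergo under $\psi$. The key step is to show $\sigma$ is a single $k$-cycle. From the parametrization $F(x,h)=\Psi_h(x)$ in Lemma~\ref{lem-diffeo}, $M$ is the quotient of $N\times\RR$ by $(x,h)\sim(\psi(x),h+b)$; the connected components of $N\times\RR$ are the cylinders $N_j\times\RR$, and the identification only links $N_j\times\RR$ to $N_{\sigma(j)}\times\RR$, so the components of the quotient are in bijection with the cycles of $\sigma$. Connectedness of $M$ forces $\sigma$ to be a single $k$-cycle. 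After relabeling so that $\sigma(j)=j+1\bmod k$, set $N':=N_1$, $\psi':=\psi^k|_{N'}$ (a diffeomorphism of $N'$, since $\psi^k$ preserves each $N_j$), and $b':=kb=2\pi k\lambda^{-1/2}$. The map $F'(x,h):=\Psi_h(x)$ satisfies $F'(\psi'(x),h+b')=\Psi_h(x)$, so it descends to a continuous map $\bar F':N'_{\psi',b'}\to M$.

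To finish, I would verify that $\bar F'$ is an isometry. Surjectivity: write any $y\in M$ as $\Psi_{h_0}(z)$ with $h_0=\alpha(y)/\sqrt{\lambda}\in[0,b)$ and $z\in N_j$ for some $j$; then $z=\psi^{j-1}(x)$ for a unique $x\in N'$, yielding $y=F'(x,h_0-(j-1)b)$, whose second coordinate can be shifted into $[0,b')$ by adding a multiple of $b'$. Injectivity on $N'\times[0,b')$: an equality $F'(x_1,h_1)=F'(x_2,h_2)$ forces $h_2-h_1=mb$ with $|m|<k$ via the $\alpha$-coordinate, and the resulting identity $x_1=\psi^{-m}(x_2)$ together with $\psi^{-m}(N_1)=N_{1-m\bmod k}$ forces $m=0$, so $h_1=h_2$ and $x_1=x_2$. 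Smoothness and local invertibility are inherited via $V=\partial_h$ in the coordinates $(x,h)$, and compactness upgrades $\bar F'$ to a diffeomorphism; the metric form~\eqref{eq:gh} (together with the compatibility $(\psi')^{*}g_{h+b'}=g_h$) carries over from Lemma~\ref{lem-diffeo} with $(N,b,\psi)$ replaced by $(N',b',\psi')$. The main obstacle is the connectedness argument that rules out multiple $\sigma$-cycles; once that is settled, the rest is a direct transcription of Lemma~\ref{lem-diffeo}.
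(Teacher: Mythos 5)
Your proposal is correct and follows essentially the same route as the paper: invoke Lemma~\ref{lem-diffeo} (via Theorem~\ref{thm-constant}), count the nodal set as two flow-related copies of $N=M_0$ giving $2k$ components, use connectedness of $M$ to force the induced permutation of components to be a single $k$-cycle, and pass to $\psi'=\psi^k|_{N'}$, $b'=kb$. The only difference is that you spell out the verification that $\bar F'$ is a well-defined bijection, which the paper compresses into ``inspecting the equivalence relation''; this is a faithful transcription of the argument in Lemma~\ref{lem-diffeo}, not a new idea.
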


\noindent {\bf Remark.}\ 
For $k=1$ we recover our previous result in Lemma~\ref{lem-diffeo}.

\begin{proof} By Lemma~\ref{lem-diffeo},
we can identify $M$ with $(M_{\psi,b},g)$,
given by equations~\eqref{eq:diffeo} and~\eqref{eq:gh}, where $N$ is a submanifold of codimension one, 
$\psi$ is a diffeomorphism of $N$, and $b=2\pi\lambda^{-\frac12}$. 
In these coordinates, the eigenfunction is a constant multiple of
$\sin(\lambda^{1/2}(h\!-\!h_0))$;
by making a suitable translation on the infinite cylinder $N\times \RR$
we may assume that $h_0=0$. The nodal set
of $\phi$ is the disjoint union 
$$
\{\phi=0\}= \bigl(N\times\{0\}\bigr)\cup
\bigl(N\times 
\{\tfrac{b}{2}\}\bigr)\,,
$$
with the metrics $g_0$ and $g_{b/2}$, respectively. 
The two pieces are interchanged by the diffeomorphism $\Psi_{b/2}$.
If $N$ has
$k$ components, then the nodal
set has $2k$ components. Performing another translation by $b/2$, if necessary, we may assume that $N'$ is a component of $N\times\{0\}$.

Since $\psi:N\to N$ is a diffeomorphism,
it permutes the components of~$N$.
We claim that the permutation is a single cycle of length $k$. To see this, note that the component
$N'\times \RR$ of $N\times\RR$ is mapped under the quotient map in equation~\eqref{eq:diffeo}
to a component of $M_{\psi,b}$.
Since $M$ is connected, the quotient map 
is surjective.
By the pigeon-hole principle this means that each component of $N$ appears exactly once
among the the iterated images
$\psi^\ell (N')$ for $\ell=0,\ldots, k-1$, and $\psi^kN'=N'$.
Setting $\psi'=\psi^{k}\big\vert_{N'}$
and inspecting the equivalence relation
in equation~\eqref{eq:diffeo}, we
conclude that $M_{\psi,b}=M_{\psi',b'}$, and
that the metrics $g_h$ on $N'$ inherited from
$N\times\RR$
are compatible with the equivalence relation.
\end{proof}

Proposition~\ref{prop-double} implies
that for each $\ell\ge 1$
the function $\phi(\ell h/k)$
is an eigenfunction of $-\Delta_g$, with eigenvalue $(\ell/k)^2\lambda$.  
In particular, $\lambda/k^2$ is a candidate
for the principal eigenvalue 
of $-\Delta_g$ on $M$.

Unfortunately we cannot use the proposition inductively
to further decompose $N'$, because we do not know whether the
heat kernel on $(N',g)$ is geodesically
decreasing. However, there is one case where more 
can be said.

\begin{corr} 
\label{cor-double}
Let $(M,g)$ be a compact connected Riemannian
manifold of dimension two such that the Laplace-Beltrami operator has a double eigenvalue $\lambda$.
If the heat kernel on $M$ is geodesically
decreasing for some non-repeating,
positive sequence of times $(t_k)_{k\ge 1}$ that does not
converge to zero
then, up to isometry, $M$ is a flat torus
$\RR^2/\Lambda$. If
$\lim t_k=\infty$, then the lattice
$\Lambda$ is
rectangular.
\end{corr}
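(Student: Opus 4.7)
The plan is to combine Proposition~\ref{prop-double} with the elementary structure of compact one-manifolds, and then rule out the Klein bottle alternative by the spectral conditions from Section~\ref{sec-aux}.

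First I would apply Proposition~\ref{prop-double} to the double eigenvalue $\lambda$, producing an isometric identification of $M$ with a mapping torus $N'_{\psi',b'}$ whose base $N'$ is a connected submanifold of codimension one and whose metric has the warped form~\eqref{eq:gh}. Since $\dim M=2$, the base $N'$ is a compact connected one-manifold, hence diffeomorphic to $\SS^1$. In dimension one the volume element $\sqrt{\det g_h}$ is essentially the metric itself, so the $h$-independence of the volume form (from the remark after Lemma~\ref{lem-diffeo}) forces $g_h\equiv g_0$. Reparametrizing $N'$ by arc length with respect to $g_0$, I obtain a coordinate $\xi\in\RR/L\ZZ$ in which the pulled-back metric on $N'\times\RR$ becomes the Euclidean $d\xi^2+dh^2$; in particular $M$ is flat.

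Next I would read off the global structure. Since $\psi'$ must be an isometry of $(\SS^1,d\xi^2)$, it is either a rotation $\xi\mapsto \xi+a$ or a reflection $\xi\mapsto -\xi+a$. In the rotation case the deck group acting on the universal cover $\RR^2$ is generated by the two translations $(L,0)$ and $(a,b')$, so $M=\RR^2/\Lambda$ is a flat torus as desired. In the reflection case, $M$ is isometric to a flat Klein bottle of the form~\eqref{eq:def-K}. To exclude this I would invoke Theorem~\ref{thm-constant}: under the stated hypothesis it forces equation~\eqref{eq:constant} for every positive eigenvalue of $-\Delta_g$. However, as observed in the remark following Proposition~\ref{prop-K}, on any flat Klein bottle the value $(2\pi)^2$ is always an eigenvalue (with $\cos 2\pi x_1$ among its eigenfunctions) and equation~\eqref{eq:constant} fails for it. This contradiction rules out the Klein bottle and gives the first assertion.

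For the second assertion, assume additionally $t_k\to\infty$. Then Theorem~\ref{thm-flat}(2) applies and implies that $\Lambda$ is either rectangular or the regular triangular (honeycomb) lattice. On the honeycomb torus, however, every nontrivial eigenvalue of $-\Delta$ has multiplicity divisible by six, as recorded in Section~\ref{sec-flat}; hence the existence of a double eigenvalue is incompatible with the honeycomb case, and $\Lambda$ must be rectangular. The main obstacle is ruling out the Klein bottle: this does not come from any direct geometric comparison but from the global spectral necessary condition~\eqref{eq:constant} of Theorem~\ref{thm-constant} combined with an explicit inspection of the eigenspace at $(2\pi)^2$. The remaining steps --- reducing to a flat mapping torus over $\SS^1$ in dimension two, and then lifting the identification to $\RR^2$ --- are essentially bookkeeping once Proposition~\ref{prop-double} is in hand.
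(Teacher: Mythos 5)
Your proposal is correct and follows essentially the same route as the paper: reduce via Proposition~\ref{prop-double} to a flat mapping torus over $\SS^1$ (using the $h$-independence of the one-dimensional volume element), split into the translation (torus) and reflection (Klein bottle) cases, exclude the Klein bottle by the failure of equation~\eqref{eq:constant} at the eigenvalue $(2\pi)^2$ as in the remark after Proposition~\ref{prop-K}, and for $t_k\to\infty$ invoke Theorem~\ref{thm-flat} and rule out the honeycomb lattice because its nontrivial eigenvalues never have multiplicity two. The only difference is that you spell out explicitly the spectral argument that the paper delegates to that remark, which is harmless.
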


\begin{proof} By Proposition~\ref{prop-double},
$M$ is a mapping torus given by equation~\eqref{eq:diffeo},
where $N$ is a connected manifold
of dimension one (hence, necessarily diffeomorphic to $\SS^1$),
equipped with a metric $g$  of the form \eqref{eq:gh}.
By Lemma~\ref{lem-diffeo},
the volume element $\sqrt{\operatorname{det} g_h}$ does not
depend on $h$. Since $N$ has dimension one, 
we can take $g_h=g_0$ equal to a
uniform metric on $\SS^1$. Thus, the metric $g$
on the infinite cylinder $\SS^1\times \RR$
is flat, and the isometry $\psi$ is
either a translation or a reflection.

If $\psi$ is a reflection then $M$ is a
flat Klein bottle, up to dilation and isometry equal to the manifold $\mathcal{K}$ in equation~\eqref{eq:def-K}. But this
is excluded by the 
remark after Proposition~\ref{prop-K},
Hence $\psi$ is a translation, and $M$ is a flat torus $\RR^2/\Lambda.$

If, moreover, 
$\lim t_k=\infty$, then
it follows from 
Theorem~\ref{thm-flat} that the lattice $\Lambda$ is either rectangular, or regular triangular.
Since the honeycomb torus has no eigenvalues of multiplicity two, 
$M$ is a rectangular torus.
\end{proof}

\noindent {\bf Remark.}\ 
It is an open question  whether 
positivity for a single $t>0$ suffices 
to reach the conclusion of Corollary~\ref{cor-double}.

\subsection*{Acknowledgments}
The second named author has been  supported by a Fields Ontario Postdoctoral Fellowship financed by NSERC Discovery Grant 311685 and NSERC Grant RGPIN-2018-06487 and later by a Mar\'ia Zambrano Postdoctoral Fellowship.

\end{document}